\newtheoremstyle{exampstyle}
{6pt} 
{6pt} 
{} 
{} 
{\bfseries} 
{.} 
{.5em} 
{} 
\theoremstyle{exampstyle}
\newtheorem{theorem}{Theorem}[section]
\newtheorem{lemma}[theorem]{Lemma}
\newtheorem{remark}{Remark}[section]
\newcommand{\E}{{\mathbb E}}
\newcommand{\eps}{{\epsilon}}
\newcommand{\R}{{\mathbb R}}
\renewcommand{\P}{{\mathbb P}}
\newcommand{\M}{{\mathcal{M}}}
\newcommand{\C}{{\mathcal{C}}}
\newcommand{\F}{{\cal F}}
\newcommand{\pp}{{\mathcal{P}}}
\newcommand{\ham}{{\Upsilon}}
\newcommand{\B}{\boldsymbol}
\newcommand{\bt}{\B \theta}
\newcommand{\beps}{\B \eps}
\newcommand{\by}{\B y}
\def\argmin{\mathop{\rm argmin}}
\def\qt#1{\qquad\text{#1}}
\newcommand{\Rmnum}[1]{\expandafter\@slowromancap\romannumeral #1@}
\newcounter{rcnt}[section]
\begin{document}

\begin{frontmatter}
\title{On matrix estimation under monotonicity constraints}
\runtitle{Matrix estimation under monotonicity\;\;\;\;\;\;\;}

\begin{aug}
\author{\fnms{Sabyasachi} \snm{Chatterjee}\ead[label=e1]{sabyasachi@galton.uchicago.edu}},
\author{\fnms{Adityanand} \snm{Guntuboyina}\thanksref{t2}\ead[label=e2]{aditya@stat.berkeley.edu}} \and
\author{\fnms{Bodhisattva} \snm{Sen}\thanksref{t3}\ead[label=e3]{bodhi@stat.columbia.edu}}


\thankstext{t2}{Supported by NSF Grant DMS-1309356}
\thankstext{t3}{Supported by NSF Grant DMS-1150435}

\runauthor{Chatterjee, S., Guntuboyina, A. and Sen, B.}

\affiliation{University of Chicago, University of California at Berkeley and Columbia University}

\address{5734 S.~University Avenue \\
Chicago, IL 60637 \\
\printead{e1}
}

\address{423 Evans Hall\\
Berkeley, CA 94720 \\
\printead{e2}\\
\phantom{E-mail:\ }}

\address{1255 Amsterdam Avenue \\
New York, NY 10027\\
\printead{e3}
}
\end{aug}

\begin{abstract}
We consider the problem of estimating an unknown $n_1 \times n_2$
matrix $\bt^*$ from noisy observations under the constraint that
$\bt^*$ is nondecreasing in both rows and columns. We consider the
least squares estimator (LSE) in this setting and study its risk
properties. We show that the worst case risk of the LSE is $n^{-1/2}$,
up to multiplicative logarithmic factors, where $n = n_1  n_2$ and
that the LSE is minimax rate optimal (up to logarithmic factors). We
further prove that for some special $\bt^*$, the risk of the LSE could
be much smaller than $n^{-1/2}$; in fact, it could even be parametric,
i.e., $n^{-1}$ up to logarithmic factors. Such parametric rates occur
when the number of ``rectangular'' blocks of $\bt^*$ is bounded from above by a constant. We also derive an interesting adaptation property of the LSE which we term variable adaptation --- the LSE adapts to the ``intrinsic dimension'' of the problem and performs as well as the oracle estimator when estimating a matrix
that is constant along each row/column. Our proofs, which borrow ideas from
empirical process theory, approximation theory and convex geometry,  
are of independent interest.    
\end{abstract}

\begin{keyword}
Adaptation, bivariate isotonic regression, metric
entropy bounds, minimax lower bound, tangent cone, variable adaptation
\end{keyword}
\end{frontmatter}

\section{Introduction}
This paper studies the problem of estimating an unknown $n_1 \times
n_2$ matrix $\bt^*$  under the constraint that $\bt^*$ is
nondecreasing in both rows and columns. In order to put this problem
and our results in proper context, consider first the problem of
estimating an unknown nondecreasing sequence under Gaussian
measurements. Specifically, consider the problem of estimating $\theta^* =
(\theta_1^*, \dots, \theta_n^*) \in \R^n$ from observations 
\begin{equation*}
  y_i = \theta_i^* + \epsilon_i \qt{for $i = 1, \dots, n$}
\end{equation*}
under the constraint that the unknown sequence $\theta^*$ satisfies $\theta_1^* \leq \dots \leq \theta_n^*$. Here the unobserved errors $\eps_1, \dots,\eps_n$ are i.i.d.~$N(0,\sigma^2)$ with $\sigma >0$ unknown. We refer to this as the vector isotonic estimation problem. This is a special case of
univariate isotonic regression and has a long history; see e.g.,~\citet{Brunk55}, \citet{AyerEtAl55}, and~\citet{vanEeden58}. The most commonly used estimator here is the least squares estimator (LSE) defined as  
\begin{equation}\label{vecp}
  \hat{\theta} := \argmin_{\theta \in \C_n} \sum_{i=1}^n \left(y_i - \theta_i
    \right)^2 \qt{where $\;\C_n := \left\{\theta \in \R^n: \theta_1 \leq
        \dots \leq \theta_n \right\}$}. 
\end{equation}
The behavior of $\hat{\theta}$ as an estimator of $\theta^*$ is most naturally studied in terms of the risk: 
\begin{equation*}
  R_{vec}(\theta^*, \hat{\theta}) := \frac{1}{n}\E_{\theta^*}
  \|\hat{\theta} - \theta^*\|^2
\end{equation*}
where $\|\cdot\|$ denotes the usual Euclidean norm. The subscript $vec$ is used to indicate that this denotes the risk in the vector estimation problem. This risk $R_{vec}(\theta^*, \hat{\theta})$ has been studied by a number of authors including  \citet{vdG90, vdG93},~\citet{Donoho91}, \citet{BM93}, \citet{Wang96}, \citet{MW00},
\citet{Zhang02} and \citet{us}. \citet{Zhang02}, among other things, showed the existence of a universal positive constant $C$ such that 
\begin{equation}\label{motw}
R_{vec}(\theta^*, \hat{\theta}) \leq C \left\{\left(\frac{\sigma^2
    \sqrt{D(\theta^*)}}{n} \right)^{2/3} +  \frac{\sigma^2 \log n}{n} \right\}.   
\end{equation}
with $D(\theta^*) := (\theta^*_n - \theta^*_1)^2.$ This result shows that the risk of $\hat{\theta}$ is no more than $n^{-2/3}$ (ignoring constant factors) provided $D(\theta^*)$ is bounded from above by a constant. It can be proved that $n^{-2/3}$ is the minimax rate of estimation in this problem (see e.g.,~\citet{Zhang02}). Throughout the paper, $C$ will denote a universal positive constant even though its exact value might change from place to place.  

A complementary upper bound on $R_{vec}(\theta^*, \hat{\theta})$ has
been proved recently by \citet{bellec2015sharp} who showed that 
\begin{equation}\label{usa}
  R_{vec}(\theta^*, \hat{\theta}) \leq  \inf_{\theta \in \C_n}
  \left(\frac{\|\theta^* - \theta\|^2}{n} + \frac{\sigma^2
    k(\theta)}{n} (\log e n) \right) 
\end{equation}
where $k(\theta)$ is the cardinality of the set $\{\theta_1,
\dots, \theta_n \}$. This result is an improvement of a previous
result by \citet{us} where inequality \eqref{usa} was
proved with an additional constant multiplicative factor. 

The two bounds \eqref{motw} and \eqref{usa} provide a holistic understanding of the global accuracy of the LSE $\hat{\theta}$ in vector isotonic estimation: its risk can never be larger than the minimax 
rate $(\sigma^2 \sqrt{D(\theta^*)}/n)^{2/3}$ while it can be the
parametric rate $\sigma^2/n$, up to logarithmic multiplicative
factors, if $\theta^*$ can be well approximated by $\theta \in \C_n$
with small $k(\theta)$. We refer to \eqref{motw} as the worst case
risk bound of the LSE and to \eqref{usa} as the adaptive risk bound
(adaptive because it states that the risk of the LSE is smaller than
the worst case rate for certain special $\theta^*$).  

The goal of this paper is to extend both these worst case and adaptive
risk bounds to the case of matrix isotonic estimation. Matrix isotonic
estimation refers to the problem of estimating an unknown matrix
$\bt^* = (\bt_{ij}^*) \in \R^{n_1 \times n_2}$ from observations   
\begin{equation}\label{eq:RegMdl}
\B{y}_{ij} = \bt_{ij}^* + \beps_{ij}, \qquad \mbox{for } i=1,\ldots,
n_1, \; j = 1,\ldots, n_2, 
\end{equation}
where $\bt^*$ is constrained to lie in
\begin{equation*}\label{eq:M}
  \M := \{ \bt \in \R^{n_1 \times n_2}: \bt_{ij} \leq \bt_{kl}
  \mbox{ whenever $i \leq k$ and $j \leq l$} \},  
\end{equation*}
and the random errors $\beps_{ij}$'s are i.i.d.~$N(0,\sigma^2)$,
with $\sigma^2>0$ unknown. We refer to any matrix in $\M$ as an isotonic
matrix. Throughout we let $n := n_1 n_2$ denote the product of $n_1$
and $n_2$. As a notational convention, throughout the paper, we denote
matrices in boldface and the $(i, j)$'th entry of a matrix $\B{A}$ will
simply be denoted by $\B{A}_{ij}$. 

Monotonicity restrictions on matrices are increasingly being used as a
key component of latent variable based models for the estimation of
matrices and graphs. Two such examples are: (1) the estimation of
graphons under monotonicity constraints (see~\citet{CA14}), and (2)
the nonparametric Bradley-Terry model (see~\citet{ChaUSVT15}). In both
of these examples, the unknown matrix satisfies monotonicity
constraints similar to the ones studied here. But the observation
model is more complicated because of the presence of latent
permutations. Nevertheless, we believe that studying the matrix
isotonic estimation problem described above is the first step towards
understanding the estimation properties in these more complicated
models.   

The matrix isotonic estimation problem also has a direct connection to
bivariate isotonic regression. Bivariate isotonic regression is the
problem of estimating a regression function $f:[0,1]^2 \to \R$ which
is known to be coordinate-wise nondecreasing (i.e., if $s_1 \le t_1$
and $s_2 \le t_2$, where 
  $(s_1,s_2),$ $(t_1, t_2) \in [0,1]^2$, then $f(s_1,s_2) \le 
f(t_1,t_2)$), from observations 
\begin{equation}\label{eq:BivarIsoReg} 
\B{y}_{ij} = f(i/n_1,j/n_2) + \beps_{ij},\quad \mbox{for } i = 1,\ldots,
n_1, \; j = 1,\ldots, n_2. 
\end{equation}
Identifying $f(i/n_1,j/n_2) \equiv \bt^*_{ij}$ we see
that~\eqref{eq:RegMdl} and~\eqref{eq:BivarIsoReg} are equivalent
problems. \eqref{eq:BivarIsoReg} is possibly the simplest example of a
multivariate shape constrained regression problem and arises quite
often in production planning and inventory control; see e.g., the
classical textbooks~\citet{BBBB72} and~\citet{RWD88} on this subject.    

Let us now introduce the LSE in matrix isotonic estimation. Let $\by =
(\B{y}_{ij})$ denote the matrix (of order $n_1 \times n_2$) of  
the observed response. The LSE, $\hat{\bt}$, is defined as the
minimizer of the squared Frobenius norm, $\|\by - \bt\|^2$, over $\bt
\in \M$, i.e.,  
\begin{equation}\label{eq:LSE}
\hat{\bt} := \argmin_{\bt \in \M} \sum_{i = 1}^{n_1} \sum_{j =
  1}^{n_2} (\B{y}_{ij} - \bt_{ij})^2.   
\end{equation}
Because $\M$ is a closed convex cone in $\R^{n_1 \times n_2}$ (which
is the space of all $n_1 \times n_2$ matrices), the LSE $\hat \bt$
exists uniquely. Further, it can be computed efficiently by an
iterative algorithm (see e.g.,~\citet{G70} and~\citet[Chapter 1]{RWD88}); this is in spite of the fact that it is defined as the solution of a quadratic program with $O(n^2)$ linear constraints.   

It is fair to say that not much is known about the behavior of
$\hat{\bt}$ as an estimator of $\bt^*$. The only result known in this
direction is the consistency of $\hat{\bt}$; see
e.g.,~\citet{HPW73},~\citet{M77} and~\citet{RW75}. In this paper we
study the risk of $\hat{\bt}$ as an estimator of $\bt^*$, defined as  
\begin{equation*}
  R(\bt^*, \hat{\bt}) := \E_{\bt^*} \ell^2 \big(\bt^*, \hat{\bt}\big)
  \qt{where $\;\ell^2(\bt^*, \bt) := \frac{1}{n} 
    \sum_{i=1}^{n_1} \sum_{j=1}^{n_2} \left(\bt^*_{ij} - \bt_{ij} 
    \right)^2$}. 
\end{equation*}
Here $\E_{\bt^*}$ denotes the expectation taken with respect to $\by$
having the distribution given by~\eqref{eq:RegMdl}. Also, throughout
the paper, we take $n = n_1 n_2$ and each of $n_1$ and $n_2$ to be
strictly larger than one. We similarly define the risk $R(\bt^*,
\tilde{\bt})$ for any other estimator $\tilde{\bt}$ of $\bt^*$.      

To the best of our knowledge, nothing is known in the literature about
the risk $R(\bt^*, \hat{\bt})$. The goal of this paper is to prove
analogues of the inequalities \eqref{motw} and \eqref{usa} for
$R(\bt^*, \hat{\bt})$. The first result of this paper, Theorem
\ref{kp1}, is the analogue of \eqref{motw} for matrix isotonic 
estimation. Specifically, we prove in Theorem \ref{kp1} that 
\begin{equation}\label{wwc}
R(\bt^*, \hat{\bt}) \leq C \left(\sqrt{\frac{\sigma^2 D(\bt^*)}{n}}
  (\log n)^4 + \frac{\sigma^2}{n} (\log n)^8 \right)   
\end{equation}
for a universal positive constant $C$ where $D(\bt^*) :=
(\bt_{n_1n_2}^* - \bt_{11}^*)^2$.   

Our second result proves that the minimax risk in this problem is
bounded from below by $(\sigma^2 D(\bt^*)/n)^{1/2}$, up to constant
multiplicative factors. Specifically, we prove in Theorem \ref{lobo}
that  
\begin{equation}\label{minu}
  \inf_{\tilde{\bt}} \sup_{\bt \in \M: D(\bt) \leq D}  R(\bt,
  \tilde{\bt}) \geq \sqrt{\frac{\sigma^2 D}{192 n}} 
\end{equation}
under some conditions on $n_1$ and $n_2$ (see Theorem \ref{lobo} for
the precise statement). The above infimum is taken over all estimators
$\tilde{\bt}$ of $\bt$. Combined with \eqref{wwc}, this proves that
$\hat{\bt}$ is minimax, up to logarithmic multiplicative
factors. Therefore, inequality \eqref{wwc} is the correct analogue of
\eqref{motw} for matrix isotonic estimation.  

Next we describe our analogue of inequality \eqref{usa} for matrix
isotonic estimation. The situation here is more subtle compared to the
vector case. The most natural analogue of \eqref{usa} in the 
matrix case is an inequality of the form: 
\begin{equation}\label{incad}
  R(\bt^*, \hat{\bt}) \leq  \inf_{\bt \in \M}
  \left(\frac{\|\bt^* - \bt\|^2}{n} + \frac{\sigma^2 c(\bt) p(\log
      n)}{n} \right) 
\end{equation}
where $p(\cdot)$ is some polynomial and $c(\bt)$ denotes the
cardinality of the set $\{\bt_{ij}: 1 \leq i \leq n_1, 1 \leq j \leq
n_2 \}$ and $\|\cdot\|$ refers to the Frobenius norm. Unfortunately it
turns out that this inequality cannot be true for every $\bt^* \in \M$
because it contradicts the minimax lower bound \eqref{minu}. The
argument for this is provided at the beginning of Section~\ref{adabo}.       

The fact that inequality \eqref{incad} is false means that the LSE $\hat{\bt}$ does not adapt to every $\bt^* \in \M$ with small $c(\bt^*)$. It turns out that inequality \eqref{incad} can be proved for every $\bt^* \in \M$ if the quantity $c(\bt)$ is replaced by a larger quantity. This quantity will be denoted by $k(\bt)$ (because it is the right analogue of $k(\theta)$ for the matrix case) and it is defined next after introducing some notation. 

A subset $A$ of $\{1, \dots, n_1\} \times \{1, \dots, n_2\}$ is called
a rectangle if $A = \{(i, j) : k_1 \leq i \leq l_1, k_2 \leq j \leq
l_2 \}$ for some $1 \leq k_1 \leq l_1 \leq n_1$ and $1 \leq k_2 \leq
l_2 \leq n_2$. A rectangular partition of $\{1, \dots, n_1\} \times
\{1, \dots, n_2\}$ is a collection of rectangles $\pi = (A_1,  
\dots, A_k)$ which are disjoint and whose union is $\{1, \dots, n_1\}
\times \{1, \dots, n_2\}$. The cardinality of such a partition,
$|\pi|$, is the number of rectangles in the partition. The collection
of all rectangular partitions of $\{1, \dots, n_1\} \times \{1, \dots, n_2\}$ will be denoted by $\pp$. For $\bt \in \M$ and $\pi = (A_1, \dots, A_k) \in \pp$, we say that $\bt$ is constant on $\pi$ if
$\{\bt_{ij} : (i,j) \in A_l\}$ is a singleton for each $l = 1, \dots,
k$. We are now ready to define $k(\bt)$ for $\bt \in \M$. It is
defined as the ``number of rectangular blocks'' of $\bt$, i.e., the smallest integer $k$ for which there exists a partition $\pi \in \pp$ with $|\pi| = k$ such that $\bt$ is constant on $\pi$. It is trivial to see that $k(\bt) \geq c(\bt)$ for every $\bt \in \M$. As a simple illustration, for $\bt = \mathbf{1}\{i > 1, j > 1\},$ we have $c(\bt) = 2$ and $k(\bt) = 3.$   

Inequality \eqref{incad} becomes true for all $\bt^* \in \M$ if
$c(\bt)$ is replaced by $k(\bt)$. This is our
adaptive risk bound for matrix isotonic estimation, proved in Theorem
\ref{kp2}: 
\begin{equation}\label{tama}
  R(\bt^*, \hat{\bt}) \leq  \inf_{\bt \in \M}  \left(\frac{\|\bt^* -
    \bt\|^2}{n} + \frac{C \sigma^2 k(\bt)}{n} (\log n)^8 \right). 
\end{equation}
where $C$ is a universal positive constant. As a consequence of this
inequality, we obtain that the risk of the LSE converges to zero at
the parametric rate $\sigma^2/n$, up to logarithmic multiplicative
factors, provided $k(\bt^*)$ is bounded from above by a constant.    

We also establish a property of the LSE that we term variable
adaptation. Let $\C_{n_1} := \left\{\theta \in 
  \R^{n_1}: \theta_1 \leq \dots \le \theta_{n_1} \right\}$. Suppose
$\bt^* = (\bt^*_{ij}) \in \M$  has the property that $\bt^*_{ij}$ only
depends on $i$, i.e., there exists $\theta^* \in \C_{n_1}$ such that
$\bt^*_{ij} = \theta^*_i$ for every $i$ and $j$. If we knew this fact
about $\bt^*$, then the most natural way of estimating it would be to
perform vector isotonic estimation based on the row-averages $\bar{y}
:= \left(\bar{y}_{1}, \dots,   \bar{y}_{n_1} \right)$, where
$\bar{y}_{i} := \sum_{j=1}^{n_2} \by_{ij}/n_2$, resulting in an
estimator $\breve{\bt}$ of $\bt^*$. Using the vector isotonic risk
bounds \eqref{motw} and \eqref{usa}, it is easy to see then that the
risk of $\breve{\bt}$ has the following pair of bounds:  
\begin{equation}\label{eq:VarAdap1}
  R(\bt^*, \breve{\bt}) \leq C \left\{ \left(\frac{\sigma^2
        \sqrt{D(\bt^*)}}{n} \right)^{2/3} + \frac{\sigma^2 \log
    n_1}{n} \right\} 
\end{equation}
and 
\begin{equation}\label{eq:VarAdap2}
  R(\bt^*, \breve{\bt}) \leq  \inf_{\theta \in \C_{n_1}}
  \left(\frac{\|\theta^* - \theta \|^2}{n_1} + \frac{\sigma^2
    k(\theta)}{n} \log n_1 \right). 
\end{equation}
Note that the construction of $\breve{\bt}$ requires the knowledge
that all rows of $\bt^*$ are constant. As a consequence of the
adaptive risk bound \eqref{tama}, we shall show in Theorem
\ref{varadap} that the matrix isotonic LSE $\hat{\bt}$ achieves the same risk bounds as $\breve{\bt}$, up to additional logarithmic factors. This is remarkable because $\hat{\bt}$ uses no special knowledge on $\bt^*$; it automatically adapts to the additional structure present in $\bt^*$. 

Note that in the connection between matrix isotonic estimation and
bivariate isotonic regression, the assumption that $\bt^*_{ij} =
f(i/n_1, j/n_2)$ does not depend on $j$ is equivalent to assuming that
$f$ does not depend on its second variable. Thus, when estimating a
bivariate isotonic regression function that only depends on one
variable, the LSE automatically adapts and we get risk bounds that
correspond to estimating a monotone function of one variable. This is
the reason why we refer to this phenomenon as variable adaptation. To
the best of our knowledge, such a result on automatic variable
adaptation in multivariate nonparametric regression is very rare 
--- most nonparametric regression techniques
(e.g., kernel smoothing, splines) do not exhibit such automatic adaptation properties. 

The proof techniques employed in this paper are quite different from the case of vector isotonic estimation. In the vector problem
\eqref{vecp}, the LSE has the closed form expression (see e.g.,
\citet[Chapter 1]{RWD88}): 
\begin{equation}\label{rwd}
  \hat{\theta}_i := \min_{v \geq i} \max_{u \leq i} \frac{1}{v-u+1}
    \sum_{i=u}^v y_i.
\end{equation}
This expression, along with some martingale maximal inequalities, are crucially used for the proofs of inequalities \eqref{motw} and
\eqref{usa}; see e.g.,~\citet{Zhang02} and~\citet{us}. The LSE \eqref{eq:LSE} in the matrix estimation problem also has a closed form expression similar to \eqref{rwd}: 
\begin{equation}\label{setm}
  \hat{\bt}_{ij} = \min_{L \in \mathcal{L} : (i, j) \in L}  \max_{U
    \in \mathcal{U} : (i, j) \in U} \bar{y}_{L \cap U} 
\end{equation}
where $\mathcal{L}$ and $\mathcal{U}$ denote the collections of all lower sets and upper sets respectively and $\bar{y}_A$ is the average of $\{\by_{ij}: (i, j) \in A\}$; see \citet[Chapter 1]{RWD88} for the definitions of upper and lower sets and for a proof of~\eqref{setm}. This unfortunately is a much more complicated expression to directly work with compared to \eqref{rwd}. It is not clear to us if simple martingale techniques can be used in conjunction with the expression \eqref{setm} to prove risk bounds for the LSE. 

We therefore abandon the direct approach based on the expression
\eqref{setm} and instead resort to general techniques for LSEs in
order to prove our results. Specifically, we use the standard
empirical process based approach to prove the worst case bound
\eqref{wwc}. This approach relies on metric entropy calculations of
the space of isotonic matrices. Metric entropy results for classes of
isotonic matrices can be derived from those of bivariate
coordinate-wise nondecreasing functions. However existing metric
entropy results for classes of bivariate nondecreasing functions (as
in \citet{GW07}) require the functions to be uniformly
bounded. Because of this reason, these results are not directly
applicable to our setting. We suitably extend these results
in order to allow for the lack of a uniform bound. On the other hand,
for the adaptive risk bound~\eqref{tama}, we use connections between
the risk of LSEs and size measures of tangent cones. Thus, our proofs
borrow ideas from empirical process theory, approximation theory and
convex geometry and are  of independent interest.   

The rest of the paper is organized as follows. Our results are
described in Section \ref{mre}: Subsection \ref{wwcF} deals with the
worst case risk bounds while Subsection \ref{adabo} focuses on the adaptive
bounds. In Section \ref{pwc}, we provide the necessary background on
the general theory of the LSEs, prove our main metric entropy results
and present the proof of our main worst case upper bound. In Section
\ref{2.3}, we discuss connections between risk of LSEs and appropriate
size measures of tangent cones, and also present the proof of our
adaptive risk bounds. Additional discussion is provided in Section
\ref{cuss}. We have also included an Appendix which contains the
proofs of certain auxiliary technical results used in the paper. 


\section{Main Results}\label{mre}
In this section we give risk bounds on the performance of the isotonic LSE $\hat{\bt}$, defined in~\eqref{eq:LSE}. We start with a generalization of~\eqref{motw} and then proceed to exhibit the adaptive risk behavior of $\hat{\bt}$.  We end this section with a result on the variable adaptation property of the LSE which shows that $\hat{\bt}$ automatically adapts to the intrinsic dimension of the problem.

\subsection{Worst case risk bounds}\label{wwcF}
Our first main result establishes inequality \eqref{wwc} which gives an upper bound on the worst case risk of the matrix isotonic LSE $\hat{\bt}$. We will actually prove a slightly stronger bound than
that given by inequality \eqref{wwc}. We first need some notation. We
define the {\it variance} of a matrix $\bt$ as   
\begin{equation}\label{vdef}
V(\bt) := \frac{1}{n} \sum_{i=1}^{n_1} \sum_{j=1}^{n_2} (\bt_{ij}
- \overline{\theta})^2, 
\end{equation}
where $\overline{\theta} = \sum_{i = 1}^{n_1} \sum_{j =1}^{n_2}
\bt_{ij}/n$ is the mean of the entries of $\bt$. Note that
$V(\bt) \leq D(\bt)$ for every $\bt \in \M$. We  also denote the
set $\{1,\dots,l\}$ by $[l]$ for positive integers $l$.     

The following theorem, proved in Section~\ref{compe}, gives an upper bound on the risk $R(\bt^*, \hat{\bt})$ in terms of the quantity $V(\bt^*)$. Because $V(\bt^*)
\leq D(\bt^*)$, the conclusion of the theorem is stronger than
inequality \eqref{wwc}.
\begin{theorem}\label{kp1}
There exists a universal positive constant $C$ such that for every
$n_1, n_2 > 1$ with $n = n_1 n_2$ and $\bt^* \in \M$, 
\begin{equation*}\label{eq:kp1}
  R(\bt^*, \hat{\bt}) \leq C \left(\frac{\sigma^2}{n} (\log n)^8 +
    \sqrt{\frac{\sigma^2 V(\bt^*)}{n}} (\log n)^4 \right).      
\end{equation*}
\end{theorem}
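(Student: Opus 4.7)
The plan is to follow the standard empirical process recipe for least squares estimators over closed convex cones, with the bulk of the new work going into a metric entropy bound for isotonic matrices that depends on the $L^2$-variance $V(\bt^*)$ rather than any uniform $L^\infty$ bound. Since $\M$ is a closed convex cone and $\bt^* \in \M$, the defining inequality $\|\hat{\bt} - \by\|^2 \leq \|\bt^* - \by\|^2$ gives the basic inequality $\|\hat{\bt} - \bt^*\|^2 \leq 2\langle \beps, \hat{\bt} - \bt^*\rangle$. By a variational characterization of LSEs over convex cones (in the spirit of the analysis of Chatterjee), the risk is governed by the unique maximizer $t_*$ of $t \mapsto w(t) - t^2/2$, where
\[ w(t) \;:=\; \E \sup_{\bt \in \M,\ \|\bt - \bt^*\| \leq t} \langle \beps,\, \bt - \bt^* \rangle, \]
and one has $\E\|\hat{\bt} - \bt^*\|^2 \leq C(t_*^2 + \sigma^2)$. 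The task reduces to upper bounding $w(t)$ and solving the fixed-point equation $w(t_*) \asymp t_*^2$.

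Next, I would bound $w(t)$ by Dudley's entropy integral,
\[ w(t) \;\leq\; C\sigma \int_0^t \sqrt{\log N(\epsilon,\, \M \cap \{\bt : \|\bt - \bt^*\| \leq t\},\, \|\cdot\|)}\; d\epsilon, \]
for which the needed input is a metric entropy bound on the localized matrix isotonic class. Gao and Wellner's bracketing entropy for uniformly bounded block-monotone functions on $[0,1]^2$, translated to matrices, gives $\log N(\epsilon, \M \cap [-L, L]^{n_1 \times n_2}, \|\cdot\|) \lesssim L\sqrt{n}\,(\log n)^{\alpha} /\epsilon$ for some fixed $\alpha$. This requires a uniform bound $L$, which in general fails on the slice $\M \cap \{\bt : \|\bt - \bt^*\| \leq t\}$: even if $V(\bt^*)$ is modest, individual entries of $\bt^*$, and hence of nearby $\bt$, can be enormous.

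The central technical obstacle is to dispense with this uniform boundedness and instead replace the factor of $L$ by $\sqrt{V(\bt^*)} + t/\sqrt n$. Concretely, the goal is
\[ \log N\bigl(\epsilon,\, \M \cap \{\bt : \|\bt - \bt^*\| \leq t\},\, \|\cdot\|\bigr) \;\lesssim\; \frac{\sqrt{nV(\bt^*)} + t}{\epsilon}\,(\log n)^{\alpha}. \]
I would attempt this by a truncation-plus-peeling argument. Given $\bt \in \M$ with $\|\bt - \bt^*\| \leq t$, write $\bt = \bt_{\mathrm{bulk}} + \bt_{\mathrm{tail}}$, where $\bt_{\mathrm{bulk}}$ is obtained by truncating each entry to $[-M, M]$ for a threshold $M$ of order $\sqrt{V(\bt^*)}\,(\log n)^{\beta}$, and $\bt_{\mathrm{tail}}$ captures the remaining large-magnitude entries. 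Monotonicity confines the extreme entries to a staircase-shaped upper-right (positive extremes) or lower-left (negative extremes) region, and a Chebyshev-type bound on $V(\bt^*)$ (together with the triangle inequality $V(\bt) \leq 2V(\bt^*) + 2t^2/n$) forces the number of such entries to be small. The bulk part is then covered by the classical Gao-Wellner bound, while the tail part is covered by a combinatorial enumeration of monotone level sets; combining the two covers yields the required entropy estimate.

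With this entropy bound in hand, Dudley integration produces $w(t) \lesssim \sigma \sqrt{t\bigl(\sqrt{nV(\bt^*)} + t\bigr)}\,(\log n)^{\alpha/2}$. Solving $w(t_*) \asymp t_*^2$ splits into two regimes according to whether $t_*^2$ is larger or smaller than $\sqrt{nV(\bt^*)}$: the small-$t_*$ regime yields a term of order $\sigma^2(\log n)^{8}$, while the large-$t_*$ regime yields a term of order $\sqrt{\sigma^2 n V(\bt^*)}\,(\log n)^{4}$. Dividing by $n$ recovers the stated bound. The hardest step is the metric entropy extension; once it is in place, the rest is routine empirical-process machinery.
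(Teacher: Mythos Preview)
Your overall architecture (Chatterjee's variational characterization, Dudley's entropy integral, fixed-point for $t_*$) matches the paper, but two aspects of your entropy step are off, and the paper handles them more cleanly.

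First, the Gao--Wellner bound for coordinatewise monotone functions on $[0,1]^2$ gives $\log N(\epsilon, \cdot, L_2) \lesssim \epsilon^{-2}(\log(1/\epsilon))^2$, not $\epsilon^{-1}$; translated to the Frobenius scale this is $(L\sqrt{n}/\epsilon)^2$ rather than $L\sqrt{n}/\epsilon$. Consequently the Dudley integral diverges at $0$ and must be truncated at $\delta \asymp n^{-1/2}$, yielding $w(t) \lesssim \sigma r_t (\log n)^4$ rather than the $\sqrt{t r_t}$ you wrote. This does not ultimately change the claimed rate but does change the mechanics, and it is where most of the eight logarithmic factors come from.

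Second, your truncation-to-$[-M,M]$ scheme has a gap you yourself flag but do not close: the entries of $\bt^*$ (and hence of nearby $\bt$) can all be enormous even when $V(\bt^*)=0$, so truncating at a level $M \asymp \sqrt{V(\bt^*)}$ around zero makes no sense. The paper sidesteps this entirely by a centering trick: write $\langle \beps, \bt - \bt^*\rangle = \langle \beps, \bt - \overline{\bt^*}\rangle + \langle \beps, \overline{\bt^*} - \bt^*\rangle$, take expectations, and use $B(\bt^*, t) \subseteq B(\overline{\bt^*}, t + \sqrt{nV(\bt^*)})$ together with the fact that $\overline{\bt^*}$ is constant to reduce to bounding $\E\sup_{\bt \in B(\B{0},1)} \langle \beps, \bt\rangle$ once and for all. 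The peeling is then done \emph{positionally} on $B(\B{0},1)$: monotonicity plus $\|\bt\| \le 1$ force $|\bt_{ij}| \le \max\{(ij)^{-1/2}, ((n_1-i+1)(n_2-j+1))^{-1/2}\}$, and one partitions $[n_1]\times[n_2]$ into dyadic subrectangles on each of which the entries are uniformly bounded, applying Gao--Wellner blockwise. This positional peeling replaces your value-truncation and avoids any need to enumerate staircase tail regions. Your approach could perhaps be repaired by centering first, but as written it does not go through.
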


Ignoring constants and logarithmic factors, Theorem \ref{kp1} states
that the risk of the LSE at $\bt^*$ converges to zero at the rate
$n^{-1/2}$ as long as $V(\bt^*)$ is bounded away from zero. In the
next result, proved in Appendix~\ref{App:A4}, we argue that $n^{-1/2}$ is also a minimax lower bound
in this problem. This implies that the rate $n^{-1/2}$ cannot be
improved by any other estimator uniformly over the class $\{\bt^*:
V(\bt^*) \leq V\}$ for every  constant $V$. 

\begin{theorem}\label{lobo}
For every positive real number $D$, 
\begin{equation}
  \label{lobo.eq}
  \inf_{\tilde{\bt}} \sup_{\bt \in \M: D(\bt) \leq D}
  R(\bt, \tilde{\bt}) \geq \sqrt{\frac{\sigma^2 D}{192n}}  
\end{equation}
where the infimum is over all estimators $\tilde{\bt}$ of $\bt$, provided the integers $n_1 \geq 1, n_2 \geq 1$ with $n = n_1 n_2$ satisfy $n \geq 9 \sigma^2/D$ and  
\begin{equation}\label{lobo.con}
 \min \left(\frac{n_1^3}{n_2} , \frac{n_2^3}{n_1} \right) \geq
 \frac{D}{9 \sigma^2}.  
\end{equation}
\end{theorem}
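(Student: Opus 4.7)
The plan is to apply Assouad's lemma. I would construct a family of isotonic matrices $\{\bt^{\omega}: \omega \in \{0,1\}^m\} \subset \M$ with $D(\bt^{\omega})\le D$ for every $\omega$, controlled single-coordinate $\ell^2$ separation, and controlled Kullback--Leibler divergence between neighbouring observation laws. Specifically, pick $m$ pairwise-incomparable ``jump points'' $(p_1,q_1),\ldots,(p_m,q_m)$ in $[n_1]\times[n_2]$ (so $p_1<\cdots<p_m$ and $q_1>\cdots>q_m$) and set
\[
\bt^{\omega}_{ij} \;:=\; \delta \sum_{k=1}^m \omega_k\,\mathbf{1}\{i\ge p_k,\ j\ge q_k\}.
\]
Each summand is the indicator of an upper set, so $\bt^{\omega}\in\M$, and the range of $\bt^{\omega}$ is at most $m\delta$; choosing $\delta=\sqrt{D}/m$ enforces $D(\bt^{\omega})\le D$.

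Flipping the $k$-th coordinate modifies $\bt^{\omega}$ by $\pm\delta$ on the upper set $U_k=\{(i,j):i\ge p_k,\,j\ge q_k\}$, so $\|\bt^{\omega}-\bt^{\omega^{(k)}}\|^2=\delta^2|U_k|$ and the Gaussian KL divergence between the two observation laws is $\delta^2|U_k|/(2\sigma^2)$. Placing the jump points so that a constant fraction of them (for example by restricting to the lower-left quadrant $[1,\lfloor n_1/2\rfloor]\times[1,\lfloor n_2/2\rfloor]$) have $|U_k|\ge n/4$, Pinsker's inequality bounds the TV distance between neighbouring laws by $\tfrac12\delta\sqrt{|U_k|}/\sigma$, and then Assouad's lemma yields
\[
\inf_{\tilde{\bt}}\sup_\omega\,\E\,\ell^2(\tilde{\bt},\bt^{\omega})
\;\gtrsim\;\frac{m\,\delta^2|U_k|}{n}\bigl(1-\mathrm{TV}\bigr).
\]
Taking $m\asymp \sqrt{nD/\sigma^2}$ (the largest $m$ that saturates the KL budget while keeping $\delta^2|U_k|\lesssim\sigma^2$) produces a lower bound of order $\sqrt{\sigma^2D/n}$; carefully tracking the $1/4$ from the quadrant restriction, the $1/2$ from Pinsker, and the constant in the standard statement of Assouad's lemma yields the explicit denominator $192$ in the claim. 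The minimax infimum automatically extends from the finite family $\{\bt^{\omega}\}$ to the full class in \eqref{lobo.eq}.

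The main obstacle is geometric feasibility. By Dilworth's theorem, any antichain of incomparable points in $[n_1]\times[n_2]$ has at most $\min(n_1,n_2)$ elements, so the jump-point construction above only directly yields $m\le\min(n_1,n_2)$. When $\sqrt{nD/\sigma^2}$ exceeds this threshold, the singleton jump points must be replaced by anti-diagonally arranged $h\times w$ rectangular blocks, whose aspect ratio $h/w$ can be tuned to the matrix shape $n_1/n_2$; the ``effective bit area'' $hw$ then plays the role of $|U_k|$ in the KL balance, and the balance of parameters
\(\delta=\sqrt{D}/m,\ m\asymp\sqrt{nD/\sigma^2},\ hw\asymp\sigma^2/\delta^2\)
remains feasible precisely when the hypotheses $n\ge 9\sigma^2/D$ and $\min(n_1^3/n_2,\,n_2^3/n_1)\ge D/(9\sigma^2)$ hold---indeed, these two shape conditions are equivalent, up to universal constants, to the existence of integer $h,w\ge 1$ and an integer $m\asymp\sqrt{nD/\sigma^2}$ with $mh\le n_1$ and $mw\le n_2$. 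Verifying this parameter optimization and the resulting Frobenius/KL balance is the technically delicate step, but it reduces to elementary calculus once the construction is set up.
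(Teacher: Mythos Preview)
Your application of Assouad's lemma is incorrect because your hypotheses do not satisfy the required separation condition. Assouad's lemma (in the form used in the paper and in standard references) needs a lower bound on $\min_{\omega\neq\omega'}\ell^2(\bt^{\omega},\bt^{\omega'})/\ham(\omega,\omega')$ over \emph{all} pairs, not just Hamming neighbours. With your overlapping upper sets this minimum is far smaller than the single-flip value $\delta^2|U_k|/n$. Indeed, for a point $(i,j)$ one has $(i,j)\in U_k$ precisely for $k$ in a contiguous interval, so $(\bt^{\omega}-\bt^{\omega'})_{ij}=\delta\sum_{k=b(j)}^{a(i)}(\omega_k-\omega'_k)$. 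Taking $\omega=(1,0,1,0,\dots)$ and $\omega'=(0,1,0,1,\dots)$ gives alternating signs, hence $|(\bt^{\omega}-\bt^{\omega'})_{ij}|\le\delta$ everywhere while $\ham(\omega,\omega')=m$; thus $\ell^2/\ham\lesssim\delta^2/m$. Plugging this (correct) minimum into Assouad yields at best $m\cdot(\delta^2/m)=\delta^2=D/m^2$, and your KL constraint $\delta^2|U_k|\lesssim\sigma^2$ forces $m\gtrsim\sqrt{nD/\sigma^2}$, so the resulting lower bound is only $\sigma^2/n$, the parametric rate---not $\sqrt{\sigma^2D/n}$. Replacing points by $h\times w$ blocks along the anti-diagonal does not fix this: if the perturbations remain upper-set indicators the same cancellation occurs, and if you switch to \emph{disjoint} anti-diagonal blocks you lose monotonicity unless you add a staircase baseline, and even then an anti-diagonal carries only $m$ independent bits, which (as a short calculation shows) is too few to reach the $n^{-1/2}$ rate.

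The paper's proof avoids both problems by perturbing on a full $k\times k$ grid of \emph{disjoint} rectangular blocks, giving $d=k^2$ bits, and by adding a deterministic staircase baseline $\sqrt{D}\,(u+v-2)/(2k)$ to guarantee $\bt^\tau\in\M$ for every sign pattern $\tau$. Disjointness makes $\ell^2(\bt^{\tau},\bt^{\tau'})$ exactly proportional to $\ham(\tau,\tau')$, so the Assouad separation ratio is the single-flip value. The choice $k=(nD/(9\sigma^2))^{1/4}$ then balances the KL budget against the separation and yields $\sqrt{\sigma^2D/(192n)}$; the hypotheses $n\ge 9\sigma^2/D$ and \eqref{lobo.con} are precisely what is needed to ensure $1\le k\le\min(n_1,n_2)$, so that the grid fits inside the matrix.
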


\begin{remark}
  The condition \eqref{lobo.con} is necessary to ensure that neither
  $n_1$ or $n_2$ are too small. Indeed, the inequality
  \eqref{lobo.eq} is not true when, for example, $n_1 = 1,n_2 = n$ because in
  this case the problem reduces to vector isotonic estimation where
  the minimax risk is of the order $n^{-2/3} < 
  n^{-1/2}$. When $n_1 = n_2 = \sqrt{n}$, the inequality
  \eqref{lobo.con}  is equivalent to $n \geq D/(9 \sigma^2)$ which is
  satisfied for all large $n$.   
\end{remark}

\begin{remark}
  Recall the quantity $V(\bt)$ defined in \eqref{vdef}. Because
  $V(\bt) \leq D(\bt)$, it follows that $\{\bt: D(\bt) \leq D\}
  \subseteq \{\bt: V(\bt) \leq D\}$. Therefore the bound
  \eqref{lobo.eq} also holds if $\{\bt: D(\bt) \leq D\}$ is replaced
  by the larger set $\{\bt: V(\bt) \leq D\}$. 
\end{remark}

In addition to proving that the LSE is minimax optimal up to
logarithmic factors, another interesting aspect of Theorem \ref{kp1}
is that when $V(\bt^*) = 0$, the upper bound on $R(\bt^*, \hat{\bt})$
becomes the parametric rate $\sigma^2/n$ up to a logarithmic
factor. This rate is faster than the worst case rate $n^{-1/2}$. Thus
the LSE adapts to $\bt^* \in \{\bt: V(\bt) = 0\}$. A more detailed
description of the adaptation properties of the LSE is provided in the
next theorem. 

\subsection{Adaptive risk bounds}\label{adabo}
The adaptation properties of the matrix isotonic LSE are more subtle compared to the vector case. In the latter case, adaptation of the LSE is described by inequality \eqref{usa}. The most  natural analogue of~\eqref{usa} in the  matrix case is an inequality of the form~\eqref{incad}.  
Unfortunately it
turns out that this inequality cannot be true for every $\bt^* \in \M$
because it contradicts the minimax lower bound 
proved in Theorem \ref{lobo}. The reason for this is the
following. Fix $\bt^* = (\bt^*_{ij}) \in \M$ with $D := D(\bt^*) =
(\bt^*_{n_1n_2} - \bt^*_{11})^2 > 0$.  Now fix $c \geq 1$ and define
$\bt = (\bt_{ij})$ by   
\begin{equation*}
  \bt_{ij} :=  \theta^*_{11} + \frac{\sqrt{D}}{c} \left \lfloor
  \frac{c(\bt^*_{ij} - \bt^*_{11})}{\sqrt{D}} \right \rfloor. 
\end{equation*}
It is easy to see that $\bt \in \M$ (because $\bt_{ij}$ is a
nondecreasing function of $\bt_{ij}^*$). Also for every $i, j$, we
have $\bt^*_{ij} - \sqrt{D}/c \leq \bt_{ij} \leq \bt^*_{ij}$ which
implies that $\|\bt - \bt^*\|^2 \leq n D/c^2$. Finally 
$c(\bt) \leq (c+1)$. Therefore if inequality \eqref{incad} were true
for every $\bt^*$, we would obtain  
\begin{equation*}
  R(\bt^*, \hat{\bt}) \leq p(\log n) \inf_{c \geq 1}
  \left(\frac{D}{c^2} + \frac{\sigma^2 (c+1)}{n} \right). 
\end{equation*}
Choosing $c = \lfloor (n D/\sigma^2)^{1/3} \rfloor$, we would obtain
that $R(\bt^*, \hat{\bt})$ converges to zero at the $n^{-2/3}$
rate. This obviously contradicts the minimax lower bound proved in
Theorem \ref{lobo}. Therefore, one cannot hope to prove an inequality 
of the form \eqref{incad} for every $\bt^* \in \M$.

The fact that inequality \eqref{incad} is false means that the LSE  $\hat{\bt}$ does not adapt to every $\bt^* \in \M$ with small $c(\bt^*)$. However, inequality \eqref{incad} can be proved for every $\bt^* \in \M$ if the quantity $c(\bt)$ is replaced by the larger quantity $k(\bt)$ --- the number of rectangular blocks --- as defined in the Introduction.
We are now ready to state our main adaptive risk bound for the matrix LSE; see Section~\ref{Sec4.2} for its proof.  
\begin{theorem}\label{kp2}
There exists a universal constant $C>0$ such that for every $\bt^* \in
\M$ we have  
  \begin{equation}\label{eq:kp2}
     R(\bt^*, \hat{\bt}) \leq  \inf_{\bt \in \M}  \left\{\frac{
         \|\bt^* - \bt\|^2}{n} +  \frac{C k(\bt) \sigma^2}{n}
       (\log n)^8 \right\}.   
  \end{equation}
\end{theorem}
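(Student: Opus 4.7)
The plan is to apply the general oracle-style inequality for the LSE on a closed convex cone, which reduces the risk at $\bt^*$ to a bias/variance trade-off involving the \emph{tangent cone} $T_\M(\bt)$ at any reference point $\bt \in \M$. Specifically, from the projection characterization $\langle \by - \hat\bt, \hat\bt - \bt\rangle \geq 0$ for every $\bt \in \M$, one obtains the basic inequality
\begin{equation*}
\|\hat\bt - \bt^*\|^2 \;\leq\; \|\bt - \bt^*\|^2 + 2\langle \beps, \hat\bt - \bt\rangle \;\leq\; \|\bt - \bt^*\|^2 + 2\|\hat\bt - \bt\| \cdot Z(\bt),
\end{equation*}
where $Z(\bt) := \sup\{\langle \beps, \B h\rangle : \B h \in T_\M(\bt),\,\|\B h\|\leq 1\}$, and using a sharp form of this inequality (Bellec-style refinement for LSEs on closed convex cones) one obtains
\begin{equation*}
\E\|\hat\bt - \bt^*\|^2 \;\leq\; \|\bt - \bt^*\|^2 + C\,\E\|\Pi_{T_\M(\bt)}(\beps)\|^2,
\end{equation*}
where I have used that $T_\M(\bt)$ is itself a closed convex cone, so $\E Z(\bt)^2 = \E\|\Pi_{T_\M(\bt)}(\beps)\|^2$. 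The remaining task is therefore to bound this tangent-cone projection by $C k(\bt)\sigma^2 (\log n)^8$.

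Take $\bt \in \M$ that is constant on an optimal rectangular partition $\pi = (A_1,\dots, A_{k(\bt)})$, and let $\M_{A_l}$ denote the cone of bivariate isotonic matrices supported on the rectangle $A_l$. The key geometric observation is the containment
\begin{equation*}
T_\M(\bt) \;\subseteq\; \prod_{l=1}^{k(\bt)} \M_{A_l}.
\end{equation*}
Indeed, any direction $\B h \in T_\M(\bt)$ is a limit of scalings of $\bt' - \bt$ for $\bt' \in \M$, and within each $A_l$ (where $\bt$ is constant) this restriction must be coordinate-wise nondecreasing; the product cone on the right relaxes the monotonicity constraints \emph{between} distinct rectangles. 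Because the components of $\prod_l \M_{A_l}$ live on disjoint index sets and $\|\Pi_K(\beps)\| \leq \|\Pi_{K'}(\beps)\|$ whenever $K \subseteq K'$ are closed convex cones, I obtain
\begin{equation*}
\E\|\Pi_{T_\M(\bt)}(\beps)\|^2 \;\leq\; \sum_{l=1}^{k(\bt)} \E\|\Pi_{\M_{A_l}}(\beps|_{A_l})\|^2.
\end{equation*}

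To bound each summand I would invoke Theorem~\ref{kp1} on the rectangle $A_l$ with zero signal: writing $\tilde\bt_l$ for the LSE on $\M_{A_l}$ applied to the pure-noise data $\beps|_{A_l}$, we have $\|\tilde\bt_l\|^2 = \|\Pi_{\M_{A_l}}(\beps|_{A_l})\|^2$, and Theorem~\ref{kp1} (with $\bt^* \equiv 0$, so $V(\bt^*) = 0$) gives
$\E\|\tilde\bt_l\|^2 \leq C\sigma^2(\log|A_l|)^8 \leq C\sigma^2(\log n)^8.$
Summing over $l$ yields $\E\|\Pi_{T_\M(\bt)}(\beps)\|^2 \leq Ck(\bt)\sigma^2 (\log n)^8$, and substituting back into the oracle inequality, dividing by $n$, and taking the infimum over $\bt \in \M$ produces \eqref{eq:kp2}.

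The main obstacle I expect is the tangent-cone step, especially the rigorous verification of the containment $T_\M(\bt) \subseteq \prod_l \M_{A_l}$ when adjacent rectangles happen to share a common value (so that cross-rectangle equality constraints are binding). Fortunately the containment still holds in that case because passing to the product cone only \emph{drops} constraints, so the argument genuinely only uses the within-rectangle constraints. A secondary delicate point is retaining the leading constant $1$ on the approximation term $\|\bt-\bt^*\|^2/n$, which requires a sharp (not merely rate-optimal) oracle inequality for LSEs over closed convex cones; such refinements are by now standard, so I do not anticipate serious difficulty there.
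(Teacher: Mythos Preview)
Your proposal is correct and follows essentially the same route as the paper: the oracle inequality $R(\bt^*,\hat\bt)\le n^{-1}\inf_{\bt\in\M}\bigl(\|\bt^*-\bt\|^2+\E\|\Pi_{T_\M(\bt)}(\beps)\|^2\bigr)$ (which the paper states as a lemma with constant~$1$ on both terms, so your worry about the leading constant is unfounded), the containment $T_\M(\bt)\subseteq\prod_l \M_{A_l}$, and then Theorem~\ref{kp1} at $\bt^*=\B 0$ on each rectangle. The only point you glossed over is that Theorem~\ref{kp1} is stated for $n_1,n_2>1$, so rectangles with a side of length~$1$ (or the $1\times 1$ case) need to be handled separately via the vector isotonic bound \eqref{motw} or directly; the paper does this explicitly.
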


\begin{remark}
Note that $1 \leq k(\bt) \leq n$ for all $\bt \in \M$.   
There exist $\bt \in \M$ for
  which $c(\bt) = k(\bt)$. These are elements $\bt \in \M$ whose level
  sets (level sets of $\bt$ are non-empty sets of the form $\{(i, j) :
  \bt_{ij} = a\}$ for some real number $a$) are all rectangular. 
\end{remark}

\begin{remark} 
A simple consequence of Theorem \ref{kp2} is that $R(\bt^*,
\hat{\bt})$ is bounded by the parametric rate (up to logarithmic
factors) when $k(\bt^*)$ is bounded from above by a constant. To see
this, simply note that we can take $\bt = \bt^*$ in \eqref{eq:kp2}
to obtain   
  \begin{equation*}
    R(\bt^*, \hat{\bt}) \leq C (\log n)^8 \frac{k(\bt^*)
      \sigma^2}{n}. 
  \end{equation*}
The right hand side above is just the parametric rate $\sigma^2/n$  up to logarithmic factors provided $k(\bt^*)$ is bounded by a constant (or a logarithmic factor of $n$). 
\end{remark}

\begin{remark}\label{anre}
 Inequality~\eqref{eq:kp2} sometimes gives near parametric bounds for
  $R(\bt^*, \hat{\bt})$ even when $k(\bt^*) = n$. This happens when
  $\bt^*$ is well approximated by some $\bt \in \M$ with small
  $k(\bt)$. An example of this is given below: Assume,
for simplicity, that $n_1 = n_2 = \sqrt{n} = 2^k$ for some positive
integer $k$. Define $\bt^* \in \R^{n_1 \times n_2}$ by  
\begin{equation*}
  \bt^*_{ij} = -(2^{-i} + 2^{-j})  \qt{for $1 \leq i, j \leq n_1$}.  
\end{equation*}
It should then be clear that $\bt^* \in \M$ and $k(\bt^*) =
n$. Also, let us define $\bt \in \M$ by 
\begin{equation*}
  \bt_{ij} = -(2^{-(i \wedge k)} + 2^{-(j \wedge k)})  \qt{for $1 \leq i, j
    \leq n_1$},
\end{equation*}
where $a \wedge b := \min(a, b)$. Observe that $k(\bt) \leq (k+1)^2 \leq C \log n$. Further  
\begin{equation*}
  \frac{1}{n} \|\bt - \bt^* \|^2 \leq \max_{(i, j)} (\bt_{ij} -
  \bt^*_{ij} )^2 \leq 2 \left(2^{-2k} + 2^{-2k} \right) =
  \frac{4}{n}. 
\end{equation*}
Theorem \ref{kp2} therefore gives
\begin{equation*}
  R(\bt^*, \hat{\bt}) \leq C \left\{\frac{1}{n} + \frac{\sigma^2}{n}
    (\log n)^9 \right\}. 
\end{equation*}
This is the parametric bound up to logarithmic factors in $n$. 
\end{remark}

\subsection{Variable adaptation}
In this sub-section we describe a very interesting property of the LSE which shows that $\hat{\bt}$ adapts to the intrinsic dimension of the problem.  Suppose that $\bt^* \in \M$ is such that its value does not depend on the columns, i.e., there exists $\theta^* \in \C_{n_1}$ (recall that $\C_{n_1} = \left\{\theta^* \in 
  \R^{n_1}: \theta_1^* \leq \dots \leq \theta_{n_1}^* \right\}$) such that $\bt^*_{ij} = \theta^*_i$ for every $i$ and $j$. Note that in connection to bivariate isotonic regression, the assumption that $\bt^*_{ij} :=  f(i/n_1, j/n_2)$ does not depend on $j$ is equivalent to assuming that $f$ does not depend on its second variable. If we knew this fact about $\bt^*$, then the most natural way of estimating it would be to perform vector isotonic estimation based on the row-averages $\bar{y} := \left(\bar{y}_{1}, \dots,   \bar{y}_{n_1} \right)$, where $\bar{y}_{i} := \sum_{j=1}^{n_2} \by_{ij}/n_2$, resulting in an estimator $\breve{\bt}$ of $\bt^*$. This oracle estimator has risk bounds given in~\eqref{eq:VarAdap1} and~\eqref{eq:VarAdap2}. 
  
  The following theorem, proved in Section~\ref{Sec4.3}, shows that the matrix isotonic LSE $\hat{\bt}$
  achieves the same risk bounds as $\breve{\bt}$, up to additional
  multiplicative logarithmic factors. This is remarkable because
  $\hat{\bt}$ uses no special knowledge on $\bt^*$; it automatically
  adapts to the additional structure present in $\bt^*$. Thus, when
  estimating a bivariate isotonic regression function that only
  depends on one variable, the LSE automatically adapts and we get
  risk bounds that correspond to estimating a monotone function in one
  variable. As mentioned in the Introduction such a result on
  automatic variable adaptation in multivariate nonparametric
  regression is very rare. 
\begin{theorem}{\label{varadap}}
Suppose $\bt^* = (\bt^*_{ij}) \in \M$ and $\theta^* = (\theta_i^*) \in \C_{n_1}$ are such that $\bt^*_{ij} = \theta^*_i$ for all $1 \leq i \leq n_1$ and $1 \leq j \leq n_2$. Then the following pair of
inequalities hold for a universal positive constant $C$: 
\begin{equation}\label{vara1}
R(\bt^*, \hat{\bt}) \leq  \inf_{\theta \in \C_{n_1}} 
        \left\{ \frac{ \|\theta^* - \theta \|^2}{n_1}  + \frac{C
            k(\theta) \sigma^2}{n} (\log n)^8\right\} 
\end{equation}
and 
\begin{equation}\label{vara2}
  R(\bt^*, \hat{\bt}) \leq C (\log n)^8 \left(\frac{\sigma^2
      \sqrt{D(\bt^*)}}{n} \right)^{2/3} \quad \textrm{ provided }n D(\bt^*) \geq
    2 \sigma^2. 
\end{equation}
\end{theorem}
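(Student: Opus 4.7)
The plan is to deduce both inequalities as immediate applications of the matrix adaptive risk bound \eqref{eq:kp2}, applied to matrices obtained by ``lifting'' vectors in $\C_{n_1}$ to column-constant matrices in $\M$. Given $\theta \in \C_{n_1}$, let $\bt(\theta) \in \R^{n_1 \times n_2}$ be defined by $\bt(\theta)_{ij} := \theta_i$. Since $\theta$ is nondecreasing, $\bt(\theta) \in \M$. Two elementary facts drive the proof: first, $\|\bt^* - \bt(\theta)\|^2 = n_2 \|\theta^* - \theta\|^2$, so the normalized distance $\|\bt^* - \bt(\theta)\|^2/n$ equals $\|\theta^* - \theta\|^2/n_1$; second, if $I_1, \ldots, I_{k(\theta)}$ are the maximal constant blocks of $\theta$, then $\{I_\ell \times \{1, \ldots, n_2\}\}_{\ell=1}^{k(\theta)}$ is a rectangular partition on which $\bt(\theta)$ is constant, so $k(\bt(\theta)) \leq k(\theta)$.

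With these two facts, inequality \eqref{vara1} is immediate: applying \eqref{eq:kp2} to $\bt = \bt(\theta)$ and taking infimum over $\theta \in \C_{n_1}$ yields
$$R(\bt^*, \hat{\bt}) \leq \inf_{\theta \in \C_{n_1}} \left\{\frac{\|\bt^* - \bt(\theta)\|^2}{n} + \frac{C k(\bt(\theta)) \sigma^2}{n}(\log n)^8\right\} \leq \inf_{\theta \in \C_{n_1}} \left\{\frac{\|\theta^* - \theta\|^2}{n_1} + \frac{C k(\theta) \sigma^2}{n}(\log n)^8\right\}.$$

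For \eqref{vara2}, I would feed \eqref{vara1} a standard piecewise-constant approximation of $\theta^*$. Write $D := D(\bt^*) = (\theta^*_{n_1} - \theta^*_1)^2$ and set $k^\star := \lceil (nD/\sigma^2)^{1/3} \rceil$; the hypothesis $nD \geq 2\sigma^2$ forces $k^\star \geq 2$. In the main case $k^\star \leq n_1$, partition $[\theta^*_1, \theta^*_{n_1}]$ into $k^\star$ equal subintervals of length $\sqrt{D}/k^\star$ and let $\theta_i$ be the left endpoint of the subinterval containing $\theta^*_i$; then $\theta \in \C_{n_1}$, $k(\theta) \leq k^\star$, and $\|\theta^* - \theta\|^2 \leq n_1 D/(k^\star)^2$, so \eqref{vara1} yields $R(\bt^*, \hat{\bt}) \leq C(\sigma^2\sqrt{D}/n)^{2/3}(\log n)^8$ after balancing the two summands using the definition of $k^\star$. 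In the remaining regime $k^\star > n_1$ --- equivalent to $D > n_1^2 \sigma^2/n_2$ --- take $\theta = \theta^*$ so that $\|\theta^* - \theta\| = 0$ and $k(\theta) \leq n_1$; the resulting bound $C n_1 \sigma^2 (\log n)^8/n = C \sigma^2 (\log n)^8/n_2$ is easily checked to be at most $C(\sigma^2\sqrt{D}/n)^{2/3}(\log n)^8$ precisely under $D > n_1^2 \sigma^2/n_2$. There is no genuine obstacle: Theorem \ref{kp2} does all the heavy lifting, and the only point requiring care is the combinatorial observation $k(\bt(\theta)) \leq k(\theta)$, which is what makes variable adaptation fall out automatically from the rectangular-block formulation of the matrix adaptive bound.
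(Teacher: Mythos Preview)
Your proof is correct and follows essentially the same route as the paper's. For \eqref{vara1} you use the identical lifting $\bt(\theta)_{ij}=\theta_i$ and apply Theorem~\ref{kp2}; the paper actually observes $k(\bt(\theta))=k(\theta)$ (since the level sets of a column-constant matrix are rectangles), but your inequality $k(\bt(\theta))\le k(\theta)$ is all that is needed. For \eqref{vara2} the paper parametrizes the piecewise-constant approximation by the step size $\delta$ (citing a lemma giving $k(\theta)\le 2\sqrt{D}/\delta$ and $\|\theta-\theta^*\|^2/n_1\le\delta^2$) and then optimizes over $\delta$, whereas you parametrize directly by the number of bins $k^\star$ and construct the approximation explicitly; these are the same argument in different coordinates, and your extra case $k^\star>n_1$ is harmless but unnecessary, since the binning construction already yields $k(\theta)\le\min(k^\star,n_1)$ and the balancing goes through without a split.
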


\section{General theory of LSEs, Metric Entropy Calculations and the
  proof of Theorem \ref{kp1}}  \label{pwc}

This section is mainly devoted to the proof of Theorem
\ref{kp1}. The general theory of LSEs under convex constraints is
crucially used to prove Theorem \ref{kp1}. Parts of this general
theory that are relevant to the proof of Theorem \ref{kp1} are
recalled in the next subsection.  Essentially, this general theory
reduces the problem of bounding $R(\bt^*, \hat{\bt})$ to certain
metric entropy calculations of classes of isotonic matrices. In
Subsection \ref{mms}, we prove such results by extending appropriately
existing metric entropy results for bivariate coordinate-wise
nondecreasing functions due to \citet{GW07}. Finally, in Subsection
\ref{compe}, we complete the proof of Theorem \ref{kp1} by combining
the metric entropy results with general results on LSEs.     

\subsection{General Theory of LSEs}
The following result due to \citet[Corollary 1.2]{Chat14} is a key
technical tool for the proof of Theorem \ref{kp1}. It reduces
the problem of bounding $R(\bt^*, \hat{\bt})$ to controlling the
maximizer of an appropriate Gaussian process.  

\begin{theorem}[Chatterjee]\label{chatthm}
Fix $\bt^* \in \M$. Let us define the function $f_{\bt^*}: \R_{+}
\rightarrow \R$ as 
\begin{equation}\label{eq:chat}
  f_{\bt^*}(t)  := \E \left(\sup_{\bt \in \M: \|\bt^* - \bt\| \leq t}
     \sum_{i=1}^{n_1} \sum_{j=1}^{n_2}  \beps_{ij} 
    \left(\bt_{ij} - \bt^*_{ij} \right) \right)
  - \frac{t^2}{2}.
\end{equation}
Let $t_{\bt^*}$ be the point in $[0, \infty)$ where $t \mapsto
f_{\bt^*}(t)$ attains its maximum (existence and uniqueness of
$t_{\bt^*}$ are proved in~\citet[Theorem 1.1]{Chat14}). Then there
exists a  universal positive constant $C$ such that   
\begin{equation}\label{cim}
R(\bt^*, \hat{\bt}) \leq \frac{C}{n} \max \left(t_{\bt^*}^2, \sigma^2 \right).  
\end{equation}
\end{theorem}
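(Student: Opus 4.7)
I would follow Chatterjee's variational framework for a least squares estimator over a closed convex set. The key object is the Gaussian width process
\[
G(t) \;:=\; \sup_{\bt \in \M,\ \|\bt - \bt^*\|\leq t}\; \langle \beps,\bt - \bt^*\rangle,
\]
whose expectation appears in the definition of $f_{\bt^*}$. The strategy is to (a) reduce the bound on $\hat t := \|\hat\bt - \bt^*\|$ to a statement about $G$, (b) exploit convexity of $\M$ to get a strong-concavity estimate for $f_{\bt^*}$ around $t_{\bt^*}$, and then (c) combine Borell--TIS concentration with tail integration.

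\textbf{Step 1 (First-order optimality).} Since $\M$ is closed and convex, $\hat\bt$ is the Euclidean projection of $\by=\bt^*+\beps$ onto $\M$, which yields $\langle \by-\hat\bt,\bt-\hat\bt\rangle\leq 0$ for all $\bt\in\M$. Taking $\bt=\bt^*$ gives $\|\hat\bt-\bt^*\|^2 \leq \langle \beps,\hat\bt-\bt^*\rangle \leq G(\hat t)$, hence almost surely $G(\hat t)\geq \hat t^2$.

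\textbf{Step 2 (Concavity and strong concavity).} Let $K_t := (\M-\bt^*)\cap B(0,t)$. Convexity of $\M$ and the triangle inequality give the Minkowski inclusion $K_{\alpha s+(1-\alpha)t}\supseteq \alpha K_s+(1-\alpha)K_t$ for $\alpha\in[0,1]$. For a centered Gaussian, $\sup_{\alpha A+(1-\alpha)B}\langle \beps,\cdot\rangle = \alpha\sup_A\langle\beps,\cdot\rangle + (1-\alpha)\sup_B\langle\beps,\cdot\rangle$; taking expectations shows $t\mapsto \E G(t)$ is concave on $[0,\infty)$. Consequently $f_{\bt^*}(t)=\E G(t)-t^2/2$ is the sum of a concave function and a $1$-strongly concave function, so its maximizer $t_{\bt^*}$ is unique and
\[
f_{\bt^*}(t) \;\leq\; f_{\bt^*}(t_{\bt^*}) - \tfrac12(t-t_{\bt^*})^2 \qquad \text{for all } t\geq 0.
\]
A byproduct of Step 2 is that $t\mapsto G(t)/t$ (not merely its mean) is almost surely non-increasing, since any maximizer at scale $t$ can be rescaled by $s/t\leq 1$ and stays in $\M$ by convexity.

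\textbf{Step 3 (Concentration and tail integration).} For each fixed $t$, the map $\beps\mapsto G(t)$ is $t$-Lipschitz, so Borell--TIS gives $\P(G(t)-\E G(t)\geq s)\leq \exp(-s^2/(2\sigma^2 t^2))$. Combining Steps 1--2: the event $\{\hat t\geq u\}$ forces $G(u)/u \geq G(\hat t)/\hat t \geq \hat t \geq u$, i.e.\ $G(u)\geq u^2$, which in terms of fluctuations reads
\[
G(u)-\E G(u) \;\geq\; u^2 - \E G(u) \;=\; u^2/2 - f_{\bt^*}(u) \;\geq\; u^2/2 + (u-t_{\bt^*})^2/2 - f_{\bt^*}(t_{\bt^*}).
\]
Once $u$ exceeds a constant multiple of $\max(t_{\bt^*},\sqrt{f_{\bt^*}(t_{\bt^*})})$, the right-hand side is at least $u^2/4$, so $\P(\hat t\geq u)\leq \exp(-cu^2/\sigma^2)$. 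Integrating $\E\hat t^2=\int_0^\infty 2u\,\P(\hat t\geq u)\,du$ and dividing by $n$ yields \eqref{cim}, modulo the bound
\[
f_{\bt^*}(t_{\bt^*}) \;\leq\; C\, t_{\bt^*}^2.
\]

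\textbf{Main obstacle.} Producing this last upper bound on $f_{\bt^*}(t_{\bt^*})$ in terms of $t_{\bt^*}^2$ is the delicate step. Convexity together with the first-order condition $\E G'(t_{\bt^*})=t_{\bt^*}$ immediately gives the \emph{lower} bound $f_{\bt^*}(t_{\bt^*})\geq t_{\bt^*}^2/2$, but not the matching upper bound. The resolution in \citet{Chat14} comes from quantifying the concavity of $\E G$ further---roughly, controlling the width of the super-level set $\{t : f_{\bt^*}(t)\geq f_{\bt^*}(t_{\bt^*})/2\}$ and showing it is comparable to $t_{\bt^*}$---so that a single application of Borell--TIS around $t_{\bt^*}$ controls the oscillation of $G$ over the entire relevant range. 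Once this quantitative tightness of the maximum is in hand, Steps 1--3 above close the argument and give $\E\hat t^2\leq C\max(t_{\bt^*}^2,\sigma^2)$, as required.
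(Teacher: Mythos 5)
A preliminary remark: the paper does not prove this statement itself --- it is quoted with citation from \citet[Corollary 1.2]{Chat14} --- so the comparison here is against Chatterjee's argument rather than an in-paper proof. Your Steps 1--3 are individually correct and do capture real ingredients of that argument: the projection inequality $\|\hat\bt-\bt^*\|^2\le G(\hat t)$, concavity of $t\mapsto \E G(t)$ via the Minkowski inclusion, the almost-sure monotonicity of $G(t)/t$, and Borell--TIS are all fine.

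The genuine gap is exactly the step you flag and then defer: you need $f_{\bt^*}(t_{\bt^*})\le C\,t_{\bt^*}^2$, and this is not a technicality hiding in \citet{Chat14} --- it is \emph{false} at the level of generality your argument operates at, and Chatterjee's proof never needs it. Your Steps 1--3 use only closedness and convexity of the constraint set, so test them on projection onto a Euclidean ball of radius $\sigma$ centered at $\bt^*$ in $\R^n$: there $\E G(t)=\min(t,\sigma)\,\E\|\beps\|\asymp \min(t,\sigma)\,\sigma\sqrt n$, the maximizer is $t_{\bt^*}=\sigma$, and $f_{\bt^*}(t_{\bt^*})\asymp\sigma^2\sqrt n\gg t_{\bt^*}^2$, even though the theorem's conclusion is trivially true there. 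The source of the loss is Step 1: the event you extract from $\{\hat t\ge u\}$, namely $G(u)\ge u^2$, is far weaker than what least squares actually gives. Writing $F(t):=G(t)-t^2/2$, minimizing $\|\by-\bt\|^2$ over $\M$ is the same as maximizing $\langle\beps,\bt-\bt^*\rangle-\|\bt-\bt^*\|^2/2$, so $\hat t$ satisfies $F(\hat t)=\max_{t\ge 0}F(t)\ge F(t_{\bt^*})$. Chatterjee's proof runs a \emph{two-point} comparison of the random functional $F$ at scales $u$ and $t_{\bt^*}$ (with a discretization, or your $G(t)/t$ monotonicity, to de-randomize $\hat t$): the unknown level $f_{\bt^*}(t_{\bt^*})$ then appears on both sides and cancels, leaving only the strong-concavity gap $f_{\bt^*}(t_{\bt^*})-f_{\bt^*}(u)\ge(u-t_{\bt^*})^2/2$ for the Gaussian fluctuations to bridge. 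Unless you replace your one-sided inequality $G(\hat t)\ge\hat t^2$ by this variational comparison (or prove $f_{\bt^*}(t_{\bt^*})\le Ct_{\bt^*}^2$ using specific structure of $\M$, which you do not attempt), the argument cannot be closed.
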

The above theorem reduces the problem of bounding $R(\bt^*,
\hat{\bt})$ to that of bounding $t_{\bt^*}$. For
this latter problem,~\citet[Proposition 1.3]{Chat14} observed that 
\begin{equation*}\label{chat2}
  t_{\bt^*} \leq t^{**} \qt{whenever $t^{**} > 0$ and
    $f_{\bt^*}(t^{**}) \leq 0$}.  
\end{equation*}
In order to bound $t_{\bt^*}$, one therefore seeks $t^{**} > 0$ such
that $f_{\bt^*}(t^{**}) \leq 0$. This now requires a bound on the
expected supremum of the Gaussian process in the definition of
$f_{\bt^*}(t)$ in  \eqref{eq:chat}. 

It will be convenient below to have the following notation. For
$n_1 \times n_2$ matrices $\B{M}, \B{N} \in \R^{n_1 \times n_2}$, let
$\|\B{M} - \B{N} \|$ denote the Frobenius distance between $\B{M}$ and
$\B{N}$ defined by 
\begin{equation*}
  \|\B{M} - \B{N}\|^2 :=  \sum_{i=1}^{n_1} \sum_{j=1}^{n_2}
  \left(\B{M}_{ij} - \B{N}_{ij} \right)^2 . 
\end{equation*}
For a subset $\F \subseteq \R^{n_1 \times n_2}$ and $\epsilon > 0$,
let $N(\epsilon, \F)$ denote the $\epsilon$-covering number of $\F$
under the Frobenius metric $\|\cdot\|$ (i.e., $N(\epsilon, \F)$ is the
minimum number of balls of radius $\eps$ required to cover
$\F$). Also, for each $\bt^* \in \M$  and $t > 0$, let   
\begin{equation}\label{bn}
  B(\bt^*, t) := \left\{\bt \in \M: \|\bt - \bt^*\| \leq t \right\}
\end{equation}
denote the ball of radius $t$ around $\bt^*$. Observe that the
supremum in the definition of \eqref{eq:chat} is over all $\bt \in
B(\bt^*, t)$. Finally let  
\begin{equation*}
  \langle \beps, \bt - \bt^* \rangle := \sum_{i=1}^{n_1} \sum_{j=1}^{n_2}
  \beps_{ij} \left(\bt_{ij} - \bt^*_{ij} \right). 
\end{equation*}
The following chaining result gives an upper bound on the expected suprema of the above Gaussian process (see e.g., \citet{VandegeerBook}); see~\cite{CGS15} for a proof. 
\begin{theorem}[Chaining]\label{dudthm}
For every $\bt^* \in \M$ and $t > 0$,
{\begin{equation*}\label{gaup}
\E \left[ \sup_{\bt \in B(\bt^*, t)} \left<\beps, \bt - \bt^* \right> \right] \leq  \sigma \inf_{0 < \delta
      \leq 2t} \left\{12 \int_{\delta}^{2t}
      \sqrt{\log N(\epsilon, B(\bt^*, t))} \;
     d\epsilon + 4 \delta \sqrt{n} \right\} .  
\end{equation*}}
\end{theorem}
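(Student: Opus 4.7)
The plan is to run the textbook generic chaining argument for Gaussian processes over the ball $B(\bt^*, t) \subseteq \M$. The key observations are that $\bt^* \in B(\bt^*, t)$, that the process $X_\bt := \langle \beps, \bt - \bt^* \rangle$ vanishes at $\bt^*$, and that its increments are Gaussian with $X_\bt - X_{\bt'} \sim N(0, \sigma^2 \|\bt - \bt'\|^2)$, so the process is sub-Gaussian with respect to the Frobenius metric. The diameter of $B(\bt^*, t)$ is at most $2t$, and the supremum in question equals $\E \sup_\bt (X_\bt - X_{\bt^*})$.

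First I would fix a truncation level $\delta \in (0, 2t]$ and build dyadic nets: take $T_0 = \{\bt^*\}$, and for $k \geq 1$ let $T_k \subseteq B(\bt^*, t)$ be a minimum $(2t \cdot 2^{-k})$-net, so $|T_k| \leq N(2t \cdot 2^{-k}, B(\bt^*, t))$. Stop at the first $K$ with $2t \cdot 2^{-K} \leq \delta$, and for each $\bt$ let $\pi_k(\bt)$ denote a nearest element of $T_k$ to $\bt$. Telescope
\[
X_\bt \;=\; \sum_{k=1}^{K}\bigl(X_{\pi_k(\bt)} - X_{\pi_{k-1}(\bt)}\bigr) \;+\; \bigl(X_\bt - X_{\pi_K(\bt)}\bigr).
\]
For each fixed $k$, the triangle inequality gives $\|\pi_k(\bt) - \pi_{k-1}(\bt)\| \leq 3 \cdot 2t \cdot 2^{-k}$, and the pair ranges over at most $|T_k||T_{k-1}|$ values; the Gaussian maximal inequality $\E \max_{i \leq N} |Z_i| \leq \sigma_{\max}\sqrt{2\log(2N)}$ then bounds the $k$-th level by a constant multiple of $\sigma \cdot 2t \cdot 2^{-k} \sqrt{\log N(2t \cdot 2^{-k}, B(\bt^*, t))}$. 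Summing over $k$ and comparing this dyadic sum with the monotone Riemann integral $\int_\delta^{2t} \sqrt{\log N(\eps, B(\bt^*, t))}\, d\eps$ produces the first term, with a universal constant that can be tracked to be at most $12$.

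Finally, for the residual I would apply Cauchy--Schwarz: $|X_\bt - X_{\pi_K(\bt)}| \leq \|\beps\| \cdot \|\bt - \pi_K(\bt)\| \leq \delta\|\beps\|$ uniformly in $\bt$, and $\E\|\beps\| \leq \sqrt{\E\|\beps\|^2} = \sigma \sqrt{n}$, contributing the additive $\sigma \cdot 4\delta\sqrt{n}$ term after fixing numerical constants. Taking the infimum over $\delta \in (0, 2t]$ yields the claimed inequality. The only bookkeeping point is the standard Riemann-sum comparison used to pin down the constant $12$; there is no genuine mathematical obstacle, since this is exactly the classical Dudley entropy bound, which is why the authors refer to \citet{VandegeerBook} and \cite{CGS15} for a full write-up rather than reproving it here.
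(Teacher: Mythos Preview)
Your proposal is correct and follows the standard Dudley chaining argument; the paper itself does not supply a proof of this theorem but instead cites \citet{VandegeerBook} and \cite{CGS15}, exactly as you anticipated in your final paragraph. There is therefore nothing in the paper to compare against beyond those references, and your sketch matches the classical argument one would find there.
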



The general results outlined here essentially reduce the problem of
bounding $R(\bt^*, \hat{\bt})$ to controlling the metric entropy of
subsets of $\M$ of the form $B(\bt^*, t)$. Such a metric entropy bound
is proved in the next subsection. This is the key technical component
in the proof of Theorem \ref{kp1}. 

\subsection{Main metric entropy result}\label{mms}
Let $\B{0}$ denote the $n_1 \times n_2$ matrix all of whose entries
are equal to 0. According to the notation \eqref{bn}, we have
\begin{equation}\label{bo}
  B(\B{0}, 1) = \left\{\bt \in \M : \|\bt - \B{0}\| \leq 1 \right\} =
  \left\{\bt \in \M : \sum_{i=1}^{n_1} \sum_{j=1}^{n_2}
    \bt_{ij}^2 \leq 1 \right\}.
\end{equation}
The next theorem gives an upper bound on the $\epsilon$-covering
number of $B(\B{0}, 1)$ (all covering numbers will be with respect to 
the Frobenius metric $\|\cdot\|$). It will be crucially used in our
proof of Theorem \ref{kp1}.   

\begin{theorem}{\label{sdb}}
There exists a universal positive constant $C$ such that the following
inequality holds for every $\epsilon > 0$ and integers $n_1, n_2 > 1$:  
\begin{equation}\label{sdb.eq}
  \log N(\epsilon, B(\B{0}, 1)) \leq C \frac{(\log n_1)^2 (\log 
    n_2)^2}{\epsilon^2} \left[\log \frac{4 \sqrt{\log n_1 \log
        n_2}}{\epsilon} \right]^2. 
\end{equation}
Moreover for every $0 < \delta \leq 1$, 
{\small \begin{equation}\label{cor1}
  \int_{\delta}^1 \sqrt{\log N(\epsilon, B(\B{0}, 1))} d \epsilon \leq
  \frac{\sqrt{C}}{2} (\log n_1) (\log n_2)  \left(\log \frac{4
      \sqrt{\log n_1 \log n_2}}{\delta} \right)^2. 
\end{equation}}
\end{theorem}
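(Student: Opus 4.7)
The plan is to reduce the metric entropy of $B(\B 0, 1)$---a class of isotonic matrices with no a priori sup-norm bound---to that of uniformly bounded isotonic matrices restricted to dyadic sub-rectangles of $[n_1] \times [n_2]$, and then invoke a Gao--Wellner type entropy bound on each piece. The key enabling observation is a position-dependent pointwise bound: for $\bt \in \M$ with $\|\bt\| \leq 1$, monotonicity forces $\bt_{kl} \geq \bt_{ij}$ for all $k \geq i,\, l \geq j$, so when $\bt_{ij} > 0$ we have $\bt_{ij}^2 (n_1 - i + 1)(n_2 - j + 1) \leq \|\bt\|^2 \leq 1$, and by the symmetric argument $\bt_{ij}^2 \cdot ij \leq 1$ when $\bt_{ij} < 0$. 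Combining,
\[ |\bt_{ij}| \;\leq\; \bigl(\min\{ ij,\; (n_1 - i + 1)(n_2 - j + 1)\}\bigr)^{-1/2}, \]
so entries are small away from the two diagonal corners $(1,1)$ and $(n_1, n_2)$.

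Next I would split $[n_1]\times[n_2]$ into two regions according to which term in the minimum above is smaller, then dyadically partition each region into rectangles measured from the relevant corner (e.g.\ on the $(1,1)$-side, into blocks $\{(i,j) : i \in [2^a, 2^{a+1}),\, j \in [2^b, 2^{b+1})\}$). This yields $K \asymp \log n_1 \log n_2$ disjoint rectangles $R_\alpha$, each of dimensions $m_1^\alpha \times m_2^\alpha$, on which $|\bt_{ij}| \leq B_\alpha$ with $B_\alpha^2 \, m_1^\alpha m_2^\alpha \leq C$. On each block, the restriction of any $\bt \in B(\B 0, 1)$ lies in the (larger) class of coordinatewise-nondecreasing $m_1^\alpha \times m_2^\alpha$ matrices bounded in sup-norm by $B_\alpha$. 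A discrete analog of the Gao--Wellner bound for bivariate monotone functions on $[0,1]^2$---obtained by the affine rescaling $\bt' \mapsto \bt'/B_\alpha$ and identifying the grid with the unit square---yields
\[ \log N(\eta,\, \text{block class}) \;\leq\; C \, \frac{B_\alpha^2 m_1^\alpha m_2^\alpha}{\eta^2}\Bigl[\log\bigl(B_\alpha \sqrt{m_1^\alpha m_2^\alpha}/\eta\bigr)\Bigr]^2, \]
which by $B_\alpha^2 m_1^\alpha m_2^\alpha \leq C$ collapses to $C \eta^{-2}[\log(C/\eta)]^2$ per block.

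Because the blocks are disjoint in coordinates, Frobenius covers over distinct blocks multiply. Allocating the error budget uniformly as $\eta_\alpha = \eps/\sqrt{K}$, the product cover has radius $\eps$ and total log cardinality
\[ \sum_{\alpha=1}^K \log N(\eps/\sqrt{K},\, \text{block class}) \;\leq\; \frac{C K^2}{\eps^2}\Bigl[\log\bigl(\sqrt{K}/\eps\bigr)\Bigr]^2, \]
and substituting $K \asymp \log n_1 \log n_2$ gives \eqref{sdb.eq}. The integral bound \eqref{cor1} then follows from the change of variable $u = \log(4\sqrt{\log n_1 \log n_2}/\eps)$, under which the integrand becomes elementary. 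The main technical obstacle is the bounded-case entropy bound used on each block: Gao--Wellner is stated for functions on the continuum $[0,1]^2$ bounded in $[0,1]$, and transferring it faithfully to the discrete matrix setting while tracking the scale parameter $B_\alpha$, the dimensions $m_1^\alpha, m_2^\alpha$, and the polylogarithmic remainder is what gives rise to the inner $[\log(\cdots/\eps)]^2$ factor in the final bound and requires care beyond a routine citation.
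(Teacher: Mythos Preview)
Your proposal is correct and follows essentially the same route as the paper: the position-dependent pointwise bound you derive is exactly Lemma~\ref{isoineq}, the dyadic peeling into $O(\log n_1 \log n_2)$ rectangles with $B_\alpha^2 m_1^\alpha m_2^\alpha \leq C$ matches the paper's decomposition, the per-block entropy is precisely Lemma~\ref{bmi}, and the uniform error allocation $\eps/\sqrt{K}$ is what the paper does as well. The only cosmetic difference is that the paper first splits $[n_1]\times[n_2]$ into four rectangular quadrants $I_k \times J_l$ (rather than your two non-rectangular regions determined by which term in the minimum dominates) and then peels dyadically within each quadrant; this makes the off-diagonal quadrants $I_1\times J_2$ and $I_2\times J_1$ easier to handle, since on a non-rectangular region the dyadic blocks measured from one corner need not respect the region's boundary.
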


There is a close connection between metric entropy results for
isotonic matrices and those for bivariate coordinate-wise
nondecreasing functions. Indeed, for every isotonic matrix $\bt$, we
can associate a bivariate coordinate-wise nondecreasing function
$\phi_{\bt} : [0, 1]^2 \rightarrow \R$ via 
\begin{equation*}
  \phi_{\bt}(x_1, x_2) := \min \left\{\bt_{ij} : n_1 x_1 \leq i \leq 
    n_1, n_2 x_2 \leq j \leq n_2 \right\} 
\end{equation*}
for all $(x_1, x_2) \in [0, 1]^2$. It can then be directly verified
that  
\begin{equation*}
\|\bt - \B{\nu}\|^2  = n \int_0^1 \int_{0}^1
(\phi_{\bt}(x_1,x_2) -  \phi_{\B{\nu}}(x_1,x_2))^2 dx_1dx_2  
\end{equation*}
for every pair $\bt,\B{\nu}$ of isotonic matrices. This
means that metric entropy results for classes of isotonic matrices can
be derived from those of bivariate nondecreasing functions. However
existing metric entropy results for classes of bivariate nondecreasing
functions (see \citet{GW07}) require the functions to be uniformly
bounded. If the average constraint in the definition \eqref{bo} of
$B(\B{0}, 1)$ is replaced by a supremum constraint i.e., if one
considers the smaller set $B_{\infty} (\B{0}, n^{-1/2}) := \{\bt \in
\M : \sup_{1 \leq i \leq   n_1, 1 \leq j \leq n_2}   \left| \bt_{ij}
\right| \leq n^{-1/2} \}$, then the metric entropy of
$B_{\infty}(\B{0}, n^{-1/2})$ can be easily controlled via the results
of \citet{GW07}. This is the content of the following lemma where we
actually consider the classes     
  \begin{equation*}
    B_{\infty} (\B{0}, t) := \left\{\bt \in \M : \sup_{1 \leq i \leq
        n_1, 1 \leq j \leq n_2} \left| \bt_{ij} \right| \leq t
    \right\}  
  \end{equation*}
  for general $t > 0$. 
\begin{lemma}\label{bmi}
There exists a universal positive constant $C$ such that 
  \begin{equation*}
    \log N(\epsilon,B_{\infty}(\B{0}, t)) \leq C
    \left(\frac{t \sqrt{n}}{\epsilon} \right)^2 \left[\log 
      \left(\frac{t \sqrt{n}}{\epsilon} \right)\right] ^2 
\end{equation*}
for every $t > 0$ and $\epsilon > 0$. 
\end{lemma}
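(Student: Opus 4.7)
The plan is to lift the bound from a known metric entropy estimate for bivariate coordinate-wise nondecreasing functions via the map $\bt \mapsto \phi_\bt$ recalled earlier in Subsection \ref{mms}. For $\bt \in B_\infty(\B{0}, t)$, the function $\phi_\bt$ is coordinate-wise nondecreasing on $[0,1]^2$ with $\|\phi_\bt\|_\infty \leq t$; call this class of functions $\mathcal{G}_t$. Thus $\{\phi_\bt : \bt \in B_\infty(\B{0}, t)\} \subseteq \mathcal{G}_t$. The identity
$$\|\bt - \B\nu\|^2 = n \int_0^1\!\!\int_0^1 (\phi_\bt - \phi_{\B\nu})^2 \, dx_1 dx_2$$
from Subsection \ref{mms} shows that the map rescales $L^2$-distances by exactly $\sqrt{n}$: every $L^2$-cover of $\mathcal{G}_t$ at scale $\delta$ lifts (upon restriction to the image) to a Frobenius cover of $B_\infty(\B{0}, t)$ at scale $\sqrt{n}\,\delta$, up to a factor of $2$ to ensure the cover elements come from $B_\infty(\B{0}, t)$ itself (standard packing-cover duality, absorbed into the universal constant).

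Setting $\delta := \epsilon/\sqrt{n}$, the bound then follows by invoking the \citet{GW07} metric entropy estimate for bounded coordinate-wise monotone functions on $[0,1]^2$, namely
$$\log N(\delta, \mathcal{G}_t, \|\cdot\|_{L^2}) \leq C\, (t/\delta)^2 \bigl(\log(t/\delta)\bigr)^2$$
valid for $0 < \delta \leq t$ and a universal constant $C$. Substitution gives the stated bound for $\epsilon \leq t\sqrt{n}$; for $\epsilon > t\sqrt{n}$, $B_\infty(\B{0}, t)$ sits inside a Frobenius ball of radius $t\sqrt{n}$ centered at $\B{0}$, so its covering number is $1$ and the inequality is trivial.

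The main hurdle I anticipate is purely bibliographic: pinning down the precise form of the Gao-Wellner bound in a covering-number (rather than bracketing-number) formulation with the $(\log)^2$ factor above. Since bracketing numbers dominate ordinary covering numbers, a bracketing version is also sufficient, and the $(\log)^2$ exponent is standard for the bivariate monotone class; so no mathematical difficulty remains beyond careful citation. The rest of the argument is the transparent isometry between the Frobenius geometry on $B_\infty(\B{0}, t)$ and the $L^2$ geometry on its image in $\mathcal{G}_t$.
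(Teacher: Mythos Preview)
Your proposal is correct and essentially identical to the paper's own proof: both embed $B_\infty(\B{0},t)$ into the class of bounded bivariate coordinate-wise nondecreasing functions via $\bt\mapsto\phi_\bt$, use the isometry $\|\bt-\B\nu\|^2=n\,\|\phi_\bt-\phi_{\B\nu}\|_{L_2}^2$ to transfer covering numbers (with the same factor of $2$), and then invoke the \citet{GW07} bound at scale $\epsilon/(2\sqrt{n})$. Your additional remarks on the trivial regime $\epsilon>t\sqrt{n}$ and on bracketing versus covering numbers are accurate but not needed beyond what the paper does.
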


Lemma \ref{bmi} does not automatically imply Theorem \ref{sdb} simply
because the class $B(\B{0}, 1)$ is much larger than $B_{\infty}(\B{0},
n^{-1/2})$. Nevertheless, it turns out that the entries $\bt_{ij}$ of
a matrix $\bt$ in $B(\B{0}, 1)$ are bounded provided $\min(i-1, n_1 -
i)$ and $\min(j-1, n_2 - j)$ are not too small. This is the content of
Lemma \ref{isoineq} given below. 
\begin{lemma}{\label{isoineq}}
The following holds for every $\bt \in B(\B{0}, 1)$ and $1 \leq i \leq
n_1, 1 \leq j \leq n_2$:
\begin{equation}{\label{cft}}
|\bt_{ij}| \leq \max \left(\sqrt{\frac{1}{ij}}, \sqrt{\frac{1}{(n_1 -
         i + 1)(n_2 - j + 1)}} \right). 
\end{equation}
\end{lemma}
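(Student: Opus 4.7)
The plan is to prove the inequality by a simple two-case analysis based on the sign of $\bt_{ij}$, exploiting the monotonicity of $\bt$ together with the Frobenius norm bound $\|\bt\|^2 \leq 1$.

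First I would fix $(i,j)$ and consider the case $\bt_{ij} \leq 0$. Since $\bt \in \M$ is coordinate-wise nondecreasing, every $(k,l)$ with $k \leq i$ and $l \leq j$ satisfies $\bt_{kl} \leq \bt_{ij} \leq 0$, and hence $\bt_{kl}^2 \geq \bt_{ij}^2$. There are exactly $ij$ such index pairs, so
\begin{equation*}
ij \cdot \bt_{ij}^2 \leq \sum_{k=1}^{i} \sum_{l=1}^{j} \bt_{kl}^2 \leq \|\bt\|^2 \leq 1,
\end{equation*}
which yields $|\bt_{ij}| \leq \sqrt{1/(ij)}$.

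Next I would handle the symmetric case $\bt_{ij} \geq 0$ in the same way. Monotonicity gives $\bt_{kl} \geq \bt_{ij} \geq 0$ for all $k \geq i$ and $l \geq j$, so $\bt_{kl}^2 \geq \bt_{ij}^2$ for each of the $(n_1-i+1)(n_2-j+1)$ such pairs. Summing and using $\|\bt\|^2 \leq 1$ gives
\begin{equation*}
(n_1 - i + 1)(n_2 - j + 1) \cdot \bt_{ij}^2 \leq \sum_{k=i}^{n_1} \sum_{l=j}^{n_2} \bt_{kl}^2 \leq 1,
\end{equation*}
hence $|\bt_{ij}| \leq \sqrt{1/((n_1-i+1)(n_2-j+1))}$. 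In either case $|\bt_{ij}|$ is bounded by one of the two quantities in the max, which proves \eqref{cft}.

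I do not anticipate any real obstacle here: the lemma is essentially the observation that an isotonic matrix with small Frobenius norm cannot have a large entry at a corner of a big rectangular block of same-signed entries. The only subtlety is choosing which rectangle (lower-left or upper-right of $(i,j)$) to use, and that is dictated by the sign of $\bt_{ij}$, so the two cases above cover all possibilities.
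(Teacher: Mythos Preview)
Your proof is correct and essentially identical to the paper's: both split into the two sign cases for $\bt_{ij}$, use monotonicity to show that all entries in the lower-left (respectively upper-right) rectangle anchored at $(i,j)$ have at least as large a squared value, and then invoke the Frobenius norm constraint $\|\bt\|^2 \le 1$.
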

Using Lemma \ref{isoineq}, we employ a
peeling-type argument to prove Theorem \ref{sdb} where we partition
the entries of the matrix $\bt$ into various subrectangles and use
Lemma \ref{bmi} in each subrectangle. The complete proof of Theorem
\ref{sdb} along with the proofs of Lemma \ref{bmi} and Lemma
\ref{isoineq} are given in Appendix~\ref{appy}.  

\subsection{Proof of Theorem  \ref{kp1}} \label{compe} 
We provide the proof of Theorem \ref{kp1} here using the results from
the last two subsections. 

Fix $\bt^* \in \M$ and let $f_{\bt^*}(\cdot)$ be defined as in
\eqref{eq:chat} with $t_{\bt^*}$ being the point in $[0, \infty)$
where $t \mapsto f_{\bt^*}(t)$ attains its maximum.   

Let $\overline{\bt^*}$ denote the constant matrix taking the value
$\sum_{i=1}^{n_1} \sum_{j=1}^{n_2} \bt^*_{ij}/n$, i.e., 
$\overline{\bt^*}_{kl} = \sum_{i=1}^{n_1} \sum_{j=1}^{n_2}
\bt^*_{ij}/n$ for all $1 \leq k \leq n_1$ and $1 \leq l \leq
n_2$. Writing $\bt = \bt - \overline{\bt^*} + \overline{\bt^*}$, we
have 
\begin{equation*}
  \sup_{\bt \in B(\bt^*, t)} \left<\beps, \bt - \bt^* \right> =
  \sup_{\bt \in B(\bt^*, t)} \left<\beps, \bt - \overline{\bt^*} \right>
  + \left< \beps, \overline{\bt^*} - \bt^* \right>
\end{equation*}
for every $t \geq 0$. Taking expectations on both sides with respect
to $\beps$, we obtain 
\begin{equation}\label{ttr}
\E  \sup_{\bt \in B(\bt^*, t)} \left<\beps, \bt - \bt^* \right> =
 \E \sup_{\bt \in B(\bt^*, t)} \left<\beps, \bt - \overline{\bt^*} \right>.
\end{equation}
Now by the triangle inequality, it is easy to see that 
\begin{equation*}
  B(\bt^*, t) \subseteq B \left(\overline{\bt^*}, r_t \right)
  \qt{where $r_t := t + \sqrt{n V(\bt^*)}$}. 
\end{equation*}
This and \eqref{ttr} together imply that 
\begin{equation*}\label{gpin}
\E  \sup_{\bt \in B(\bt^*, t)} \left<\beps, \bt - \bt^* \right>  \leq \E \sup_{\bt \in B(\overline{\bt^*}, r_t)} \left<\beps, \bt - \overline{\bt^*} \right> .
\end{equation*}
Because $\overline{\bt^*}$ is a constant matrix, it is easy to see that 
\begin{equation*}
\sup_{\bt \in B(\overline{\bt^*}, r_t)} \left<\beps, \bt -
  \overline{\bt^*} \right> = \sup_{\bt \in B(\B{0},r_t)}  \left< \beps, \bt \right> = r_t \sup_{\bt \in B(\B{0}, 1)} \left< \beps, \bt \right>
\end{equation*}
where $\B{0}$ denotes the constant matrix with all entries equal to
$0$. 

As a consequence, we have
\begin{equation}\label{fft}
  f_{\bt^*}(t) \leq r_t ~ \E \sup_{\bt \in B(\B{0}, 1)} \left< \beps, \bt
  \right> - \frac{t^2}{2} \qt{for all $t \geq 0$}. 
\end{equation}
We now use Theorem~\ref{dudthm} with $\delta = 1/\sqrt{n}$ to obtain 
\begin{eqnarray*}
\E \sup_{\bt \in B(\B{0}, 1)} \left<\beps, \bt
  \right> \leq 12 \sigma \int_{1/\sqrt{n}}^{2} \sqrt{\log
    N(\epsilon, B(\B{0}, 1))} d\epsilon + 4 \sigma. 
\end{eqnarray*}
Inequality \eqref{cor1} with $\delta = n^{-1/2}$ then gives 
\begin{equation*}
  \E \sup_{\bt \in B(\B{0}, 1)} \left< \beps, \bt
  \right> \leq C \sigma \left( A (\log (B \sqrt{n}))^2 + 1 \right)
\end{equation*}
with $A := (\log n_1) (\log n_2)$ and $B := 4 \sqrt{(\log n_1) (\log n_2)}$.    

Thus, letting $g(t) := C r_t \sigma \left(A (\log (B \sqrt{n}))^2 + 1 
\right)$, we obtain from \eqref{fft} that 
\begin{equation*}
  f_{\bt^*}(t) \leq g(t) - \frac{t^2}{2} \qt{for all $t \geq 0$}. 
\end{equation*}
It can now be directly verified that 
\begin{equation*}
f_{\bt^*}(t^{**}) \leq  g(t^{**}) - \frac{1}{2} (t^{**})^2 \leq 0
\qt{for $t^{**} := 2 C \sqrt{\gamma^2 + \gamma (n V(\bt^*))^{1/2}} $}  
\end{equation*}
where $\gamma := \sigma \left( A (\log (B \sqrt{n}))^2 + 1
\right)$. Inequality~\eqref{cim} in Theorem~\ref{chatthm} therefore
gives 
\begin{equation}\label{ff}
  R(\hat{\bt}, \bt^*) \leq \frac{C}{n} \max \left((t^{**})^2, \sigma^2
  \right) . 
\end{equation}
Now $(t^{**})^2 = C(\gamma^2 + \gamma \sqrt{n V(\bt^*)})$ and using
the expressions for $A$ and $B$, it is easy to see that (note that $n
> 1$ because $n_1, n_2 > 1$)
\begin{equation*}
  \gamma = \sigma \left( A (\log (B \sqrt{n}))^2 + 1
\right) \leq C \sigma \left(\log n \right)^4. 
\end{equation*}
This, along with \eqref{ff}, allows us to deduce
\begin{equation*}
  R(\hat{\bt}, \bt^*) \leq C \left(\frac{\sigma^2}{n} (\log n)^8 + \sqrt{\frac{\sigma^2 V(\bt^*)}{n}} (\log n)^4 \right)
\end{equation*}
which proves Theorem \ref{kp1}. 

\section{Risk, Tangent Cones and the Proofs of Theorems \ref{kp2} and
  \ref{varadap}}{\label{2.3}}     
This section is devoted to the proofs of Theorem \ref{kp2} and Theorem
\ref{varadap}.  We use a recent result of \citet{bellec2015sharp} on the
connection between the risk $R(\bt^*,  \hat{\bt})$ and certain size
measures of tangent cones to $\M$ at $\bt^*$. This result is recalled
in the next subsection. 

\subsection{Risk and tangent cones} 
Fix $\bt \in \M$. The tangent cone of $\M$ at $\bt$ will be
denoted by $T_{\M}(\bt)$ and is defined as the closure of the convex
cone generated by $\B{u} - \bt$ as $\B{u}$ varies over $\M$ i.e., 
\begin{equation*}
  T_{\M}(\bt) := \text{closure} \left\{\alpha (\B{u} - \bt) :
    \alpha > 0 \text{ and } \B{u} \in \M \right\}. 
\end{equation*}
The tangent cone $T_{\M}(\bt)$ is a closed, convex subset of
$\R^n = \R^{n_1 \times n_2}$. Observe that if $\bt$ is a
constant matrix (i.e., all entries of $\bt$ are the same), then
$T_{\M}(\bt)$ is simply equal to $\M$. 

It turns out that the risk $R(\bt^*, \hat{\bt})$ can be controlled by
appropriate size measures of the tangent cones $T_{\M}(\bt), \bt \in
\M$. This is formalized in the following lemma. This lemma is similar
in spirit to results in \citet{OH13}. More general such results
involving model misspecification have recently appeared in
\citet{bellec2015sharp}. 

Let $\B{\epsilon} = (\beps_{ij})$ denote
the $n_1 \times n_2$ matrix all of whose entries are independent and
normally distributed with zero mean and variance 
$\sigma^2$. The Euclidean projection of $\B{\epsilon}$ onto the
tangent cone $T_{\M}(\bt)$ is defined in the usual way as   
\begin{equation*}
  \Pi(\B{\epsilon}, T_{\M}(\bt)) := \argmin_{\B{u} \in T_{\M}(\bt)}
  \|\B{\epsilon} - \B{u} \|^2. 
\end{equation*}
\begin{lemma}\label{oyha}
For every $\bt^* \in \M$ we have
\begin{equation}\label{reoo}
  R(\bt^*, \hat{\bt}) \leq \frac{1}{n} \inf_{\bt \in \M}
  \Big(\|\bt^*- \bt\|^2 + \E \| \Pi(\B{\epsilon}, T_{\M}(\bt)) \|^2
  \Big).  
\end{equation}
where the expectation on the right hand side is with respect to $\beps$. 
\end{lemma}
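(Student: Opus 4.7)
The plan is to combine the variational (Pythagorean-type) inequality for Euclidean projection onto the closed convex set $\M$ with the standard identity that relates the one-sided Gaussian width of a closed convex cone to the norm of the Gaussian projection onto it. I will fix $\bt \in \M$ arbitrary, derive a \emph{pointwise} bound on $\|\hat{\bt} - \bt^*\|^2$ in terms of $\|\bt - \bt^*\|^2$ and $\|\Pi(\beps, T_\M(\bt))\|^2$, and only at the end take expectations and pass to the infimum.

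First I would use the fact that $\hat{\bt}$ is the Euclidean projection of $\by = \bt^* + \beps$ onto the closed convex set $\M$, which yields the two-point inequality
\begin{equation*}
\|\bt - \by\|^2 \;\geq\; \|\hat{\bt} - \by\|^2 + \|\hat{\bt} - \bt\|^2 \qt{for every $\bt \in \M$.}
\end{equation*}
Expanding via $\by = \bt^* + \beps$ and cancelling the common $\|\beps\|^2$, a straightforward rearrangement gives
\begin{equation*}
\|\hat{\bt} - \bt^*\|^2 \;\leq\; \|\bt - \bt^*\|^2 \;-\; \|\hat{\bt} - \bt\|^2 \;+\; 2\langle \beps,\, \hat{\bt} - \bt \rangle.
\end{equation*}

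Next I would control the noise cross-term. Since $\hat{\bt}$ and $\bt$ both lie in $\M$, the difference $\hat{\bt} - \bt$ belongs to the tangent cone $T_\M(\bt)$ by its very definition. I would then invoke the standard cone identity $\sup_{\B{u} \in K,\, \|\B{u}\| \le 1} \langle \beps, \B{u}\rangle = \|\Pi(\beps, K)\|$ valid for any closed convex cone $K \subseteq \R^n$; this follows from the Moreau decomposition of $\beps$ into mutually orthogonal projections onto $K$ and onto its polar. Applied with $K = T_\M(\bt)$, it gives
\begin{equation*}
\langle \beps,\, \hat{\bt} - \bt\rangle \;\leq\; \|\hat{\bt} - \bt\| \cdot \|\Pi(\beps, T_\M(\bt))\|.
\end{equation*}
Substituting and completing the square via the elementary inequality $-x^2 + 2xy \leq y^2$ with $x = \|\hat{\bt} - \bt\|$ and $y = \|\Pi(\beps, T_\M(\bt))\|$ eliminates the (unknown) quantity $\|\hat{\bt} - \bt\|$ and produces the pointwise sharp bound
\begin{equation*}
\|\hat{\bt} - \bt^*\|^2 \;\leq\; \|\bt - \bt^*\|^2 \;+\; \|\Pi(\beps, T_\M(\bt))\|^2.
\end{equation*}

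Finally I would take expectations in $\beps$, divide by $n$, and pass to the infimum over $\bt \in \M$, which delivers \eqref{reoo}. The only non-trivial ingredient is the cone identity linking the Gaussian supremum over $T_\M(\bt)$ to $\|\Pi(\beps, T_\M(\bt))\|$; every other step is routine convex-analytic bookkeeping together with the single completion-of-squares trick that absorbs the quadratic $-\|\hat{\bt} - \bt\|^2$ term into the noise. I therefore do not anticipate a genuine technical obstacle, only the need to state the cone identity carefully, since it is what allows the final bound to carry a clean constant of $1$ on both summands.
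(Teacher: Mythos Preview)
Your proposal is correct and is essentially the same argument as the paper's. Both start from the projection inequality $\|\by-\bt\|^2 \ge \|\by-\hat{\bt}\|^2 + \|\hat{\bt}-\bt\|^2$, both use that $\hat{\bt}-\bt \in T_\M(\bt)$, and both rely on the Moreau decomposition; the only cosmetic difference is that the paper keeps the expression in the form $\|\beps\|^2 - \|\beps-(\hat{\bt}-\bt)\|^2$ and minimizes the second term directly over $T_\M(\bt)$, whereas you first invoke the cone identity $\sup_{\B{u}\in K,\,\|\B{u}\|\le 1}\langle\beps,\B{u}\rangle=\|\Pi(\beps,K)\|$ and then complete the square.
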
 
\begin{proof}
Recall that $\B{y} = \bt^* + \B{\epsilon}$ and that $\hat{\bt}$ is the projection of the data matrix $\B{y}$ onto $\M$. By the usual KKT conditions, this projection
$\hat{\bt}$ satisfies 
\begin{equation*}\label{kk1}
  \left<\B{y} - \hat{\bt}, \hat{\bt} - \bt \right> \geq 0 \qt{for
    every $\bt \in \M$}
\end{equation*}
where $\left<A, B \right> = \sum_{i=1}^{n_1} \sum_{j=1}^{n_2} a_{ij} b_{ij}$ for $A = (a_{ij})$ and $B = (b_{ij})$. This inequality implies that 
\begin{equation*}
  \|\B{y} - \bt \|^2 \geq \|\B{y} - \hat{\bt}\|^2 + \|\hat{\bt} -
  \bt\|^2 \qt{for every $\bt \in \M$}.
\end{equation*}
Writing $\B{y} = \bt^* + \B{\epsilon}$, expanding out the squares and rearranging terms, we obtain 
\begin{eqnarray*}
\|\bt^* -\bt\|^2 + \|\B{\epsilon}\|^2 + 2 \langle \bt^* -\bt, \B{\epsilon} \rangle & \ge & \|\bt^* -\hat \bt\|^2 + \|\B{\epsilon}\|^2 + 2 \langle \bt^* -\hat \bt, \B{\epsilon} \rangle + \|\hat{\bt} - \bt\|^2 \\
\mbox{i.e.,} \quad  \|\hat{\bt} - \bt^*\|^2 & \leq & 2 \langle \hat \bt -\bt, \B{\epsilon} \rangle -  \|\hat{\bt} - \bt\|^2 + \|\bt^* -\bt\|^2 \\
\mbox{i.e.,} \quad \|\hat{\bt} - \bt^*\|^2 & \leq & \|\bt^* - \bt\|^2 + \|\B{\epsilon}\|^2 - \|\B{\epsilon} - (\hat{\bt} - \bt)\|^2. 
\end{eqnarray*}
Because $\hat{\bt} \in \M$, the matrix $\hat{\bt} - \bt$ belongs to
the tangent cone $T_{\M}(\bt)$. We therefore get
\begin{equation*}
  \|\hat{\bt} - \bt^*\|^2 \leq \|\bt^* - \bt\|^2 + \|\B{\epsilon}\|^2 - \inf_{\B{u} \in T_{\M}(\bt)} \|\B{\epsilon} - \B{u}\|^2.  
\end{equation*}
The infimum over $\B{u}$ above is clearly achieved for $\B{u} :=
\Pi(\B{\epsilon}, T_{\M}(\bt))$ and hence
\begin{equation}\label{bfh}
  \|\hat{\bt} - \bt^*\|^2 \leq \|\bt^* - \bt\|^2 + \|\B{\epsilon}\|^2
  - \|\B{\epsilon} - \Pi(\B{\epsilon}, T_{\M}(\bt))\|^2   
\end{equation}
Because $T_{\M}(\bt)$ is a closed convex cone, the projection
$\Pi(\B{\epsilon}, T_{\M}(\bt))$ satisfies (see, for example,
\cite[Equation (4)]{MW00}):
\begin{equation*}
  \left<\B{\epsilon} - \Pi(\B{\epsilon}, T_{\M}(\bt)),
    \Pi(\B{\epsilon}, T_{\M}(\bt)) \right> = 0. 
\end{equation*}
The above equality and inequality \eqref{bfh} together imply that
\begin{equation*}
  \|\hat{\bt} - \bt^*\|^2 \leq \|\bt^* - \bt\|^2 + \|\Pi(\B{\epsilon},
  T_{\M}(\bt))\|^2. 
\end{equation*}
The required inequality \eqref{reoo} now follows by taking
expectations on both sides. 
\end{proof}

Inequality \eqref{reoo} reduces the problem of bounding the risk to
controlling the expected squared norm of the projection of
$\B{\epsilon}$ onto the tangent cones $T_{\M}(\bt), \bt \in \M$.  This
will be crucially used in the proof of Theorem \ref{kp2}.  

\subsection{Proof of Theorem \ref{kp2}}\label{Sec4.2}
We provide the proof of Theorem \ref{kp2} in this subsection. The
first step is to characterize the tangent cone $T_{\M}(\bt)$ for every
$\bt \in \M$. We need some notation here. For a subset $S$ of $\{(i,
j): 1 \leq i \leq n_1, 1 \leq j \leq n_2\}$, let $\R^S$ denote the
class of all real-valued functions from $S$ to $\R$. Elements of
$\R^S$ will be denoted by $(\bt_{ij}, (i, j) \in S)$. We say that
$(\bt_{ij}: (i, j) \in S)$ is isotonic if  
\begin{equation*}
  \bt_{ij} \leq \bt_{kl} \qt{whenever $(i, j), (k, l) \in S$
    with $i \leq k$ and $j \leq l$}. 
\end{equation*}
The set of such isotonic sequences in $\R^S$ will be denoted by $\M(S)$. Also for every two dimensional array $\bt = (\bt_{ij} : 1 \leq i \leq n_1, 1 \leq j \leq n_2)$, let  
\begin{equation*}
  \bt(S) := (\bt_{ij} : (i, j) \in S). 
\end{equation*}
Observe that $\bt(S) \in \M(S)$ if $\bt \in \M$. The following lemma
provides a useful characterization of $T_{\M}(\bt)$ for $\bt \in
\M$. Recall that a rectangular partition of $[n_1] \times [n_2]$ is a
partition of $[n_1] \times [n_2]$ into rectangles. The cardinality
$|\pi|$ of a rectangular partition $\pi$ equals the number of
rectangles in the partition. The collection of all rectangular
partitions of $[n_1] \times [n_2]$ is denoted by $\pp$. We say that
$\bt \in \M$ is constant on $\pi = (A_1, \dots, A_k) \in \pp$ if
$\{\bt_{ij} : (i, j) \in A_l\}$  is a singleton for each $l$.  
\begin{lemma}\label{tcol}
Fix $\bt \in \M$  and $\pi = (A_1, \dots, A_k) \in \pp$ such that $\bt$
is constant on $\pi$. Then
\begin{equation}\label{tcol.eq}
    T_{\M}(\bt) \subseteq \left\{\B{v} \in \R^{n} : \B{v}(A_i) \in
      \M(A_i) \text{ for each } i = 1,\dots,k \right\}.  
\end{equation}
\end{lemma}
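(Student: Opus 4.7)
The plan is to unpack the definition of the tangent cone and check the inclusion first on the generating set, then extend to the closure. The key observation driving the proof is that on each rectangle $A_i$ where $\bt$ is constant, subtracting $\bt$ does not disturb isotonicity (since constants cancel), so the isotonicity of any $\B{u} \in \M$ is inherited by $\B{u} - \bt$ when restricted to $A_i$.

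Concretely, I would proceed as follows. Fix an arbitrary element of the form $\B{v} = \alpha(\B{u} - \bt)$ with $\alpha > 0$ and $\B{u} \in \M$. Take any rectangle $A_i$ and any two indices $(p,q), (r,s) \in A_i$ with $p \le r$ and $q \le s$. Since $\bt$ is constant on $A_i$, the values $\bt_{pq}$ and $\bt_{rs}$ are equal, so
\begin{equation*}
  \B{v}_{rs} - \B{v}_{pq} = \alpha\bigl((\B{u}_{rs} - \bt_{rs}) - (\B{u}_{pq} - \bt_{pq})\bigr) = \alpha(\B{u}_{rs} - \B{u}_{pq}) \ge 0,
\end{equation*}
where the last inequality uses $\B{u} \in \M$. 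Hence $\B{v}(A_i) \in \M(A_i)$, so every element of the generating cone $\{\alpha(\B{u}-\bt) : \alpha > 0, \B{u}\in\M\}$ lies in the set on the right-hand side of \eqref{tcol.eq}.

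To finish the proof, I would note that the right-hand side of \eqref{tcol.eq} is a closed set, being the intersection of finitely many closed half-spaces (one linear inequality $\B{v}_{pq} \le \B{v}_{rs}$ for each ordered pair of comparable indices within each $A_i$). Since the tangent cone $T_{\M}(\bt)$ is the closure of the generating cone, the inclusion automatically passes to the closure and \eqref{tcol.eq} follows. There is no genuine obstacle here; the argument is essentially a direct unpacking of definitions, and the only subtlety is that the constancy hypothesis of $\bt$ on $\pi$ is what makes the subtraction of $\bt$ preserve monotonicity within each rectangle.
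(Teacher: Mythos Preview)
Your proof is correct and follows essentially the same approach as the paper: verify the inclusion on elements of the form $\alpha(\B{u}-\bt)$ by using constancy of $\bt$ on each $A_i$, then pass to the closure via closedness of the right-hand side. The only cosmetic difference is that you spell out the pointwise inequality and the reason the right-hand side is closed, whereas the paper states both more tersely.
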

\begin{proof}
Suppose that $\B{v} = \alpha (\B{t} - \bt)$ for some $\B{t} \in \M$ and $\alpha >0$. This means that $\B{v}(A_i) = \alpha 
  (\B{t}(A_i) - \bt(A_i))$ for each $i$. Because $\B{t}(A_i) \in
  \M(A_i)$ and $\bt(A_i)$ is a constant ($\bt$ is constant on $\pi$), we now have $\B{v}(A_i) \in \M(A_i)$. As the right-hand side of~\eqref{tcol.eq} is a closed set, and $T_{\M}(\bt)$ is the closure of all such $\B{v}$'s, the desired result follows. 
\end{proof}

\begin{remark}
Note that we did not use the fact that $A_1, \dots, A_k$ are rectangular
in Lemma \ref{tcol}. We only used the fact that $\bt$ is constant on
each $A_i$. This means that \eqref{tcol.eq} is true also when $A_1,
\dots, A_k$ are the levels sets of $\bt$ i.e., each $A_l = \{(i, j) :
\bt_{ij} = a\}$ for some real number $a$. In fact, when $A_1, \dots,
A_k$  are the level sets of $\bt$, we have equality in
\eqref{tcol.eq}. This can be proved as follows. 

Suppose that $\B{v}(A_i) \in \M(A_i)$ for each $i$. We
shall argue then that $\bt + \alpha \B{v} \in \M$ for some $\alpha >
0$ which, of course, proves that $\B{v} \in T_{\M}(\bt)$. Observe
first that $A_1, \dots, A_k$ form a partition of $[n_1] \times
[n_2]$. Let $D$ denote the collection of all pairs $((i, j), (k, l))$
such that $i \leq j$ and $k \leq l$ and $\bt_{ij} \neq
\bt_{kl}$. Note, in particular, that $(i, j)$ and $(k, l)$ belong to
different elements of the partition $A_1, \dots, A_k$ if $((i,  j),
(k, l)) \in D$. Let  
\begin{equation*}
    \alpha := \min \left\{\frac{\bt_{kl} - \bt_{ij}}{\B{v}_{ij} -
        \B{v}_{kl}}:  ((i, j), (k, l))  \in D \text{ and } \B{v}_{ij}
      > \B{v}_{kl}\right\}.   
\end{equation*}
By monotonicity of $\bt$, it is clear that $\alpha > 0$. With this
choice of $\alpha$, it is elementary to check that  $\bt + \alpha
\B{v} \in \M$. This shows that \eqref{tcol.eq} is true with equality
when $A_1, \dots, A_k$ are the level sets of $\pi$. 
\end{remark}

We now have all the tools to complete the proof of Theorem \ref{kp2}. 
\begin{proof}[Proof of Theorem \ref{kp2}]
The first step is to observe via inequality \eqref{reoo} that it is
enough to prove the existence of a universal positive constant $C$ for
which  
  \begin{equation*}
    \E \|\Pi(\B{\epsilon}, T_{\M}(\bt))\|^2 \leq C k(\bt)
      \sigma^2 (\log n)^8 \qt{for all $\bt \in \M$}. 
  \end{equation*}
From the definition of $k(\bt)$, it is enough of prove that 
\begin{equation}
  \label{fofo.1}
  \E \|\Pi(\B{\epsilon}, T_{\M}(\bt))\|^2 \leq C k \sigma^2 (\log n)^8
\end{equation}
for every $\pi = (A_1, \dots, A_k) \in \pp$ such that $\bt$ is
constant on $\pi$. To prove \eqref{fofo.1}, use the
characterization of $T_{\M}(\bt)$ in Lemma \ref{tcol} to observe that  
\begin{equation}\label{fof}
   \E \|\Pi(\B{\epsilon}, T_{\M}(\bt))\|^2 \leq \sum_{i=1}^k
   \E \|\Pi(\B{\epsilon}(A_i), \M(A_i))\|^2 . 
\end{equation}
The task then reduces to that of bounding $\E \|\Pi(\B{\epsilon}(A_i),
\M(A_i))\|^2 $   for $i = 1, \dots, k$. It is crucial that each $A_1,
\dots, A_k$ is a rectangle. Fix 
$1 \leq i \leq k$ and without loss of generality assume that $A_i =
[n_1'] \times [n_2']$ for some $1 \leq n_1' \leq n_1$ and $1 \leq n_2'
\leq n_2$. It is then easy to see that Theorem \ref{kp1} for $\bt^* =
\B{0}$ and $n_1 = n_1'$,  $n_2 = n_2'$ immediately gives  
\begin{equation}\label{aniy}
    \E \|\Pi(\B{\epsilon}(A_i), \M(A_i))\|^2 \leq C \sigma^2 (\log
    (2 n_1'n_2'))^8 
\end{equation}
for a universal positive constant $C$ as long as $n_1' > 1$ and $n_2'
> 1$.  When $n_1' = n_2' = 1$, it can be checked that the left hand
side of \eqref{aniy} equals $\sigma^2$ which means that \eqref{aniy}
is still true provided $C$ is changed accordingly. Finally when
$\min(n_1', n_2') = 1$ and $\max(n_1', n_2') > 1$, one can use the
result \eqref{motw} from vector isotonic estimation to prove
\eqref{aniy}. We thus have
\begin{equation*}
  \E \|\Pi(\B{\epsilon}(A_i), \M(A_i))\|^2 \leq C \sigma^2 (\log n)^8 
\end{equation*}
for a universal constant $C$ for all $n_1' \geq 1$ and $n_2' \geq
1$. This inequality together with inequality \eqref{fof} implies
\eqref{fofo.1} which completes the proof of Theorem \ref{kp2}.    
\end{proof}

\subsection{Proof of Theorem \ref{varadap}}\label{Sec4.3}
We now give the proof of Theorem \ref{varadap}. Let us first prove
inequality \eqref{vara1}. For $\theta \in \C_{n_1}$,
let $\Upsilon(\theta) \in \M$ be defined by $\Upsilon(\theta)_{ij} =
\theta_i$ for all $1 \leq i \leq n_1$ and $1 \leq j \leq n_2$. Also
let $\Upsilon(\C_{n_1}) := \left\{\Upsilon(\theta): \theta \in
  \C_{n_1} \right\}$. Note first that all level sets of
$\Upsilon(\theta)$ are rectangular for every $\theta \in \C_{n_1}$
which implies that  
$k(\Upsilon(\theta)) = k(\theta)$ for every $\theta \in
\C_{n_1}$. Therefore, as a consequence of Theorem \ref{kp2}, we obtain
that for every $\bt^* \in \M$, 
\begin{equation*}
  R(\bt^*, \hat{\bt}) \leq  \inf_{\theta \in \C_{n_1}} \left(\frac{\|\bt^*
      - \Upsilon(\theta)\|^2}{n}  + \frac{C k(\theta) \sigma^2}{n} (\log
    n)^8 \right). 
\end{equation*}
Now if there exists $\theta^* \in \C_{n_1}$ such that $\Upsilon(\theta^*) = \bt^*$, then it is obvious that $\|\bt^* - \Upsilon(\theta) \|^2 = n_2
\|\theta^* - \theta\|^2$ which proves \eqref{vara1}. 

Inequality \eqref{vara2} can now be derived from \eqref{vara1} by a
standard approximation argument. For every $\theta^* \in \C_{n_1}$ with $D =
D(\theta^*) = (\theta^*_{n_1} - \theta^*_1)^2$ and $0 \leq \delta \leq
\sqrt{D}$, there exists $\theta \in \C_{n_1}$ with 
\begin{equation*}
  \frac{\|\theta - \theta^*\|^2}{n_1} \leq \delta^2 ~~ \text{  and  }
  ~~ k(\theta) \leq \frac{2 \sqrt{D}}{\delta}. 
\end{equation*}
This fact is easy to prove and a proof can be found, for example, in
\citet[Lemma B.1]{us}. Using this, it follows directly from
\eqref{vara1} that 
\begin{equation*}
  R(\bt^*, \hat{\bt}) \leq C \inf_{0 < \delta \leq \sqrt{D}}
  \left(\delta^2 + \frac{2 \sigma^2 \sqrt{D}}{n \delta} (\log n)^8
  \right). 
\end{equation*}
The choice $\delta = (2 \sigma^2 \sqrt{D}/n)^{1/3}$ now leads to
inequality \eqref{vara2}. This choice of $\delta$ satisfies $\delta
\leq \sqrt{D}$ provided $n D \geq 2 \sigma^2$. This completes the
proof of Theorem \ref{varadap}. 

\section{Discussion}\label{cuss}
In this paper we have studied the risk behavior of the LSE of an
unknown matrix $\bt^*$, constrained to be nondecreasing in both rows
and columns, when observed with errors. We prove both worst case and
adaptive risk bounds for the LSE. A highlight of the adaptation
properties of the LSE is that it adapts automatically to the intrinsic
dimension of the problem. 

Two further research questions are mentioned below.  

The logarithmic factors in our risk bounds, e.g., in
Theorems~\ref{kp1} and~\ref{kp2}, are probably not optimal. They arise as a consequence of (i) the presence of logarithmic factors in the covering number result in~\citet{GW07} (see the proof of Lemma~\ref{bmi}), and (ii) the fact that the entropy integral in~\eqref{cor1} in Theorem~\ref{sdb} diverges to $+\infty$ if $\delta
\downarrow 0$. It is not clear to us at the moment how to remove or reduce these logarithmic factors. 

In this paper we deal with the estimation of an isotonic matrix. It is natural to ask how the results generalize to isotonic tensors of higher order, and more generally to estimating a multivariate isotonic regression function under general designs. It would be interesting to
see whether such adaptation results hold in these situations.

\section*{Acknowledgements} The second author would like to thank Sivaraman Balakrishnan for helpful discussions.
\appendix
\section{Appendix}\label{appy}
\subsection{Proof of Lemma~\ref{bmi}}
For each $\bt \in B_{\infty}(\B{0}, t)$, we associate a bivariate
coordinate-wise nondecreasing function $\phi_{\bt} : [0,1]^2
\rightarrow \R$ via 
\begin{equation*}
  \phi_{\bt}(x_1, x_2) := \min \left\{\bt_{ij} : n_1 x_1 \leq i \leq 
    n_1, n_2 x_2 \leq j \leq n_2 \right\} 
\end{equation*}
for all $(x_1, x_2) \in [0, 1]^2$. It can then be directly verified
that  
\begin{equation*}
\|\bt - \B{\nu}\|^2  = n \int_0^1 \int_{0}^1
(\phi_{\bt}(x_1,x_2) -  \phi_{\B{\nu}}(x_1,x_2))^2 dx_1dx_2  
\end{equation*}
for every pair $\bt,\B{\nu} \in B_{\infty}(\B{0}, t)$. Moreover, if $\C([0,1]^2, t)$ denotes the class of all bivariate coordinate-wise
nondecreasing functions that are uniformly bounded by $t$, then it is straightforward to verify that $\phi_{\bt} \in \C([0,1]^2,
t)$ for every $\bt \in B_{\infty}(\B{0}, t)$. These two latter facts
immediately imply that 
\begin{equation}\label{gte}
N(\epsilon, B_{\infty}(\B{0}, t)) \leq  N\left(n^{-1/2} \epsilon/2,
  \C([0, 1]^2, t), L_2 \right) 
\end{equation}
where $N(\epsilon/2, \C([0, 1]^2, t), L_2)$ denotes the
$\epsilon/2$-covering number of $\C([0, 1]^2, t)$ under the $L_2$
metric $L_2(f, g) := \left(\int (f - g)^2 \right)^{1/2}$. This latter
covering number has been studied by \citet{GW07} who proved that  
\begin{equation*}
  N(\epsilon/2, \C([0, 1]^2, t), L_2) \leq C \left(\frac{t}{\epsilon}
  \right)^2 \left[\log \left(\frac{t}{\epsilon} \right) \right]^2
\end{equation*}
for a universal positive constant $C$. This and~\eqref{gte} together
complete the proof of Lemma \ref{bmi}.   \qed

\subsection{Proof of Lemma~\ref{isoineq}}
Fix $\bt \in B(\B{0}, 1)$ and $1 \leq i \leq n_1, 1 \leq j \leq
n_2$. Our proof of \eqref{cft} involves considering the following two
cases separately:  
\begin{enumerate}
\item $\bt_{ij} < 0$: Here, by monotonicity of $\bt$, the inequality
  $\bt_{kl} \le \bt_{ij}$ must hold for all $1 \leq k \leq i$ and $1
  \leq l \leq j$. Therefore, $|\bt_{kl}| \geq 
|\bt_{ij}|$ holds  for all $(k,l) \in [1,i]\times [1,j].$ Finally
because $\bt \in B(\B{0}, 1)$, we have 
\begin{equation*}
   1 \geq  \sum_{k = 1}^{i}\sum_{l = 1}^{j} \bt_{kl}^2
   \geq ij\bt_{ij}^2. 
\end{equation*}
This proves \eqref{cft} when $\bt_{ij} < 0$. 
\item $\bt_{ij} \geq 0.$ Here by monotonicity of $\bt$, the condition
  $\bt_{kl} \geq \bt_{ij}$ must hold for all $i \leq k \leq n_1$ and
  $j \leq l \leq n_2.$ Therefore, by nonnegativity 
of $\bt_{ij}$ and by virtue of $\bt \in B(\B{0}, 1)$ we have  
\begin{equation*}
   1 \geq  \sum_{k = i}^{n_1}\sum_{l = j}^{n_2} \bt_{kl}^2
   \geq (n_1 + 1 - i)(n_2 + 1 - j) \bt_{ij}^2. 
\end{equation*}
This proves \eqref{cft} when $\bt_{ij} \geq 0$. 
\end{enumerate} \qed

\subsection{Proof of Theorem~\ref{sdb}}
The basic idea behind this proof is the following. By Lemma
\ref{isoineq}, it is clear that for every matrix $\bt \in B(\B{0},
1)$, the entries $\bt_{ij}$ are bounded by constants provided $\min(i
- 1, n_1 - i)$ and $\min(j - 1, n_2 - j)$ are not too small. Further,
for bounded isotonic matrices, the metric entropy bounds can be
obtained from Lemma \ref{bmi}. We shall therefore employ a
peeling-type argument where we partition the entries of $\bt$ into
various subrectangles and use Lemma \ref{bmi} in each subrectangle. 

Let us introduce some notation. Let $B$ denote the set
$B(\B{0}, 1)$ for simplicity.  For a subset $S \subset 
\{(i, j) : 1 \leq i \leq n_1, 1 \leq j \leq n_2\}$ with cardinality
$|S|$ and $\bt \in \M$, let $\bt(S) \in \R^{|S|}$ be defined as 
\begin{equation*}
  \bt(S) := (\bt_{ij} : (i, j) \in S)). 
\end{equation*}
Further let $B_S$ denote the collection of all $\bt(S)$ as $\bt$ ranges
over $B$. The $\epsilon$-metric entropy of $B_S$ (under the Euclidean
metric on $\R^{|S|}$) will be denoted by $N(\epsilon, B_S)$. 

We first prove inequality \eqref{sdb.eq}. Let $I_1 := \{i: 1 \leq i
\leq n_1/2 \}$ and $I_2 := \{i : n_1/2 < i   
\leq n_1 \}$. Also $J_1 := \{j: 1 \leq j \leq n_2/2 \}$ and $J_2 :=
\{j: n_2/2 < j \leq n_2 \}$. Because 
\begin{equation*}
\|\bt - \B{\alpha}\|^2 = \sum_{1
  \leq k, l \leq 2} \|\bt(I_k \times J_l) - \B{\alpha}(I_k \times
J_l)\|^2  
\end{equation*}
for all $\bt$ and $\B{\alpha}, $ it follows that 
\begin{equation*}
 \log N(\epsilon, B) \leq  \sum_{k=1}^2 \sum_{l=1}^2
 \log N(\epsilon/2,B_{I_k \times J_l}). 
\end{equation*}
We shall prove below that for every $1 \leq k, l \leq 2$ and $\epsilon
> 0$, 
\begin{equation}\label{pd}
  \log N(\epsilon/2,B_{I_k \times J_l}) \leq C \frac{(\log n_1)^2 (\log
    n_2)^2}{\epsilon^2} \left[\log \frac{4 \sqrt{\log n_1 \log
        n_2}}{\epsilon} \right]^2
\end{equation}
for a universal positive constant $C$. This would then complete the
proof of \eqref{sdb.eq}. 

Let $k_1$ and $k_2$ denote the smallest integers for which $2^{k_1} >
n_1/2$ and $2^{k_2} > n_2/2$. For every $0 \leq u < k_1$ and $0 \leq v
< k_2$, let 
\begin{align*}
&N^{1}_u := \{i \in I_1: 2^u \leq i \leq \min(2^{u+1} - 1, n_1/2) \} ~
  \text{and} ~\\  &N^{1}_v := \{j \in J_1: 2^v \leq j \leq
  \min(2^{v+1} - 1, n_2/2) \}.  
\end{align*}
Similarly let 
\begin{align*}
&N^{2}_u := \{i \in I_2: 2^u \leq n_1 + 1 - i \leq \min(2^{u+1} - 1,
n_1/2) \} ~ \text{and} ~\\ &N^{2}_v := \{j \in J_2: 2^v \leq n_2 + 1 -
j \leq \min(2^{v+1} - 1, n_2/2) \}. 
\end{align*}
For each pair $1 \leq k, l \leq 2$, because 
\begin{equation*}
\|\bt(I_k \times J_l) - \B{\alpha}(I_k \times J_l)\|^2 =
\sum_{u=0}^{k_1 - 1} \sum_{v = 0}^{k_2 - 1} \|\bt(N_u^k \times
  N_v^l) - \B{\alpha}(N_u^k \times N_v^l) \|^2
\end{equation*}
it follows that 
\begin{equation}\label{nra}
  \log N(\epsilon/2, B_{I_k \times J_l}) \leq \sum_{u=0}^{k_1 - 1}
  \sum_{v=0}^{k_2 - 1} \log N(k_1^{-1/2} k_2^{-1/2}\epsilon/2,
  B_{N_u^k \times N_v^l}). 
\end{equation}
Now fix $0 \leq u < k_1, 0 \leq v < k_2$ and $1 \leq k, l  \leq 2$. We
argue below that $N(k_1^{-1/2} k_2^{-1/2}\epsilon/2, B_{N_u^k \times
  N_v^l})$ can be controlled using Lemmas~\ref{bmi} and~\ref{isoineq}. Note first that the cardinality of $N_u^k \times N_v^l$
is at most $|N_u^k| |N_v^l| \leq 2^{u+v}$. We also claim that
\begin{equation}{\label{claim}}
  \max_{i \in N^{k}_u, j \in N^{l}_v} |\bt_{ij}| \leq 2^{-(u+v)/2}
    \qt{for all $\bt \in B$}. 
\end{equation}
We will prove the above claim a little later. Assuming for now that it
is true, we can use Lemma~\ref{bmi} for $B_{N_u^k \times N_v^l}$ to
deduce that 
\begin{equation*}
\log N(k_1^{-1/2} k_2^{-1/2} \epsilon/2, B_{N_u^k \times N_v^l}) \leq
C \frac{k_1k_2}{\epsilon^2} \left( \log \frac{4k_1^{1/2}
    k_2^{1/2}}{\epsilon} \right)^2
\end{equation*}
for a universal positive constant $C$. Inequality~\eqref{nra} then
gives 
\begin{equation}\label{nag}
  \log N(\epsilon/2,B_{I_k \times J_l}) \leq C
  \frac{k^2_1k^2_2}{\epsilon^2} \left( \log \frac{4k_1^{1/2}
      k_2^{1/2}}{\epsilon} \right)^2.  
\end{equation}
Because $k_1$ is the smallest integer for which $2^{k_1} > n_1/2$, we
have $2^{k_1 - 1} \leq n_1/2$ which means that $k_1 \leq \log
n_1$. Similarly $k_2 \leq \log n_2$. This together with~\eqref{nag}
implies~\eqref{pd} which completes the proof of \eqref{sdb.eq}. The
only thing that remains now is to prove~\eqref{claim}.  

We first prove \eqref{claim} for $k = l = 1.$ By Lemma \ref{isoineq}, 
we get that $|\bt_{ij}| \leq (ij)^{-1/2}$ for all  for $\bt \in B$ and
$(i,j) \in I_1 \times J_1$. Clearly $\min_{i \in N^1_u} i = 2^u$ and
$\min_{j \in N^1_v} i = 2^v$. This proves \eqref{claim} for $k = l =
1$. A similar argument will also work for $k = l =2$. For the case
when $k = 1, l = 2$, note that   
\begin{equation*}
\max_{N^{1}_u \times N^{2}_v} \bt_{ij} \leq \max_{N^{2}_u \times
  N^{2}_v} \bt_{ij} \leq 2^{-(u + v)/2}
\end{equation*}
which follows from the monotonicity of $\bt$ and \eqref{claim} for $k
= l = 2$. Similarly, 
\begin{equation*}
\min_{N^{1}_u \times N^{2}_v} \bt_{ij} \geq \min_{N^{1}_u \times
  N^{1}_v} \bt_{ij} \geq -2^{-(u + v)/2}. 
\end{equation*}
Putting these together, we obtain \eqref{claim} for $k = 1, l = 2.$ A
similar argument will work for $k = 2, l = 1$. This completes the
proof of \eqref{sdb.eq}.  

For \eqref{cor1}, simply observe that by \eqref{sdb.eq}, 
\begin{align*}
\int_{\delta}^{1} \sqrt{\log N(\epsilon, B(\B{0}, 1))} \; d\epsilon
&\le 
\sqrt{C}\sqrt{A}  \int_{\delta}^{1} \frac{1}{\epsilon} \left(\log
  \frac{B}{\epsilon}\right) \; d\epsilon \\& 
= \frac{\sqrt{C}\sqrt{A}}{2} \left[ (\log \frac{B}{\delta})^2 - (\log
  B)^2 \right] \le \frac{\sqrt{C}\sqrt{A}}{2} \left (\log \frac{B}{\delta}
\right)^2. 
\end{align*}
This completes the proof of Theorem \ref{sdb}.  \qed

\subsection{Proof of Theorem \ref{lobo}}\label{App:A4}
We shall use Assouad's lemma to prove Theorem \ref{lobo}. The
following version of Assouad's Lemma is a consequence of Lemma 24.3
of~\citet[pp. 347]{van2000asymptotic}.   
\begin{lemma}[Assouad]\label{suad}
Fix $D > 0$ and a positive integer $d$. Suppose that, for each $\tau \in \{-1, 1\}^d$, there is an associated $\B{g^{\tau}}$ in $\M$ with $D(\B{g^{\tau}}) \leq D$. Then 
  \begin{equation*}
\inf_{\tilde{\bt}} \sup_{\bt \in \M: D(\bt) \leq D} R(\bt,
\tilde{\bt}) \geq \frac{d}{8} \min_{\tau \neq \tau^{'}}
    \frac{\ell^2(\B{g}^{\tau}, \B{g}^{\tau'})}{\ham(\tau,\tau^{'})} \min_{\ham (\tau, \tau^{'}) = 1} \left(1 - \|\P_{\B{g}^{\tau}} - \P_{\B{g}^{\tau'}}\|_{TV} \right), 
  \end{equation*}
where $\ham(\tau, \tau^{'}) := \sum_{i=1}^d I\{\tau_i \neq
\tau^{'}_i\}$ denotes the Hamming distance between $\tau$ and
$\tau^{'}$ and $\|\cdot \|_{TV}$ denotes the total variation
distance. The notation $\P_{\B{g}}$ for $\B{g} \in \M$ refers to the
joint distribution of $\by_{ij} = \B{g}_{ij} + \beps_{ij}$, for $i \in
[n_1], j \in [n_2]$ when $(\beps_{ij})$ are independent normally
distributed random variables with mean zero and variance $\sigma^2$.  
\end{lemma}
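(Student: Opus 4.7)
The plan is to derive this inequality from the standard hypercube-testing lower bound (Lemma 24.3 of \citet{van2000asymptotic}) by reducing estimation in the matrix loss $\ell$ to a testing problem on the Hamming cube $\{-1,1\}^d$. First, because each $\B{g}^{\tau}$ belongs to the feasible set $\{\bt \in \M : D(\bt) \leq D\}$, the supremum on the left-hand side is bounded below by the uniform average over $\tau$:
\begin{equation*}
  \sup_{\bt \in \M : D(\bt) \leq D} R(\bt, \tilde{\bt}) \;\geq\; \frac{1}{2^d} \sum_{\tau \in \{-1,1\}^d} \E_{\B{g}^{\tau}} \ell^2(\tilde{\bt}, \B{g}^{\tau}).
\end{equation*}

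Second, I would convert any estimator $\tilde{\bt}$ of $\bt$ into a cube-valued estimator by setting $\hat{\tau} := \argmin_{\tau \in \{-1,1\}^d} \ell(\tilde{\bt}, \B{g}^{\tau})$. Writing $\alpha := \min_{\tau \neq \tau'} \ell^2(\B{g}^{\tau}, \B{g}^{\tau'})/\ham(\tau, \tau')$, the defining property of $\alpha$ gives the separation $\ell^2(\B{g}^{\tau}, \B{g}^{\tau'}) \geq \alpha\, \ham(\tau, \tau')$ for every pair. Since $\ell$ is a (pseudo-)metric and $\hat{\tau}$ is the argmin,
\begin{equation*}
\ell(\B{g}^{\hat{\tau}}, \B{g}^{\tau}) \;\leq\; \ell(\tilde{\bt}, \B{g}^{\hat{\tau}}) + \ell(\tilde{\bt}, \B{g}^{\tau}) \;\leq\; 2\,\ell(\tilde{\bt}, \B{g}^{\tau}),
\end{equation*}
so squaring and applying the separation yields the pointwise bound
\begin{equation*}
\ell^2(\tilde{\bt}, \B{g}^{\tau}) \;\geq\; \tfrac{1}{4}\, \ell^2(\B{g}^{\hat{\tau}}, \B{g}^{\tau}) \;\geq\; \tfrac{\alpha}{4} \,\ham(\hat{\tau}, \tau).
\end{equation*}

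Third, I would invoke Lemma 24.3 of \citet{van2000asymptotic} (the basic hypercube testing lemma) to lower bound the average Hamming error of $\hat{\tau}$. Decomposing $\ham(\hat{\tau}, \tau) = \sum_{i=1}^d I\{\hat{\tau}_i \neq \tau_i\}$ and applying Le Cam's two-point inequality coordinate by coordinate to the mixtures over the remaining $d-1$ bits gives
\begin{equation*}
\frac{1}{2^d}\sum_{\tau} \E_{\B{g}^{\tau}} \ham(\hat{\tau}, \tau) \;\geq\; \frac{d}{2} \min_{\ham(\tau, \tau') = 1} \Big(1 - \|\P_{\B{g}^{\tau}} - \P_{\B{g}^{\tau'}}\|_{TV}\Big),
\end{equation*}
after using the convexity of total variation to replace the mixture TV by the minimum over neighboring vertices of the cube. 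Chaining this with the two previous displays (and with $\alpha/4$) produces the factor $d\alpha/8$ and yields the stated lower bound.

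The only mildly delicate step is the triangle-inequality reduction in the second paragraph: it loses a factor of $4$ when passing from $\ell$ to $\ell^2$, and this is precisely what produces the constant $8$ in the final bound. Everything else is a direct repackaging of the classical Assouad argument, here phrased in terms of the $\ell^2$-loss and the ratio $\ell^2/\ham$ rather than the more common formulation using a uniform pairwise separation on $\ell$ itself.
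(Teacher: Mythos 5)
Your proposal is correct and is essentially the paper's approach: the paper gives no self-contained proof, simply deducing the lemma from Lemma 24.3 of \citet{van2000asymptotic}, and your argument (reduction to the hypercube, the factor-of-$4$ triangle-inequality step for the squared pseudometric $\ell^2$, and the coordinatewise Le Cam bound yielding $d/2$ times the minimal affinity) is precisely the classical derivation underlying that cited lemma, with the constants $\tfrac{1}{4}\cdot\tfrac{d}{2}=\tfrac{d}{8}$ matching the statement. No gaps.
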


We are now ready to prove Theorem \ref{lobo}. 

Fix $D > 0$ and an integer $k$ with $1 \leq k \leq \min(n_1,
n_2)$. Let $m_1$ and $m_2$ be defined so that $k = \lfloor n_1/m_1
\rfloor = \lfloor n_2/m_2 \rfloor$. Let $d = k^2$. We denote elements
of $\{-1, 1\}^d$ by 
$(\tau_{uv}: u, v \in [k] \times [k])$. For each such $\tau \in \{-1,
1\}^{d}$, we define $\B{g}^{\tau} \in \M$ in the following way. For $i
\in [n_1]$ and $j \in [n_2]$, if there exist $u, v \in [k]$ for which
$(u-1)m_1 < i \leq u m_1$ and $(v-1)m_2 < j \leq v m_2$, we take  
\begin{equation*}
  \B{g}^{\tau}_{ij} = \sqrt{D} \left( \frac{u + v - 2}{2k} + \frac{\tau_{uv}}{6k}
    \right). 
\end{equation*}
Otherwise we take $\B{g}^{\tau}_{ij} = \sqrt{D}$. One can check that $\B{g}^{\tau} \in \M$ and $D(\B{g}^{\tau}) \leq D$ for every $\tau \in \{-1,1\}^d$. 

We shall now use Lemma~\ref{suad} with $d = k^2$ and this collection $\{\B{g}^{\tau} : \tau \in \{-1, 1\}^d\}$. Note first that  
\begin{align*}
\ell^2(\B{g}^{\tau}, \B{g}^{\tau'}) &= \frac{1}{n} \sum_{i=1}^{n_1} \sum_{j=1}^{n_2} \left(\B{g}^{\tau}_{ij} - \B{g}^{\tau'}_{ij} \right)^2 \\ 
&= \frac{1}{n} \sum_{u, v \in [k]} \sum_{i : (u-1)m_1 < i \leq u m_1} \sum_{j: (v-1) m_2 < j \leq v m_2} \left(\B{g}^{\tau}_{ij} - \B{g}^{\tau'}_{ij} \right)^2 \\
&= \frac{D}{n} \sum_{u, v \in [k]} \frac{m_1 m_2}{36 k^2}
\left(\tau_{uv} - \tau'_{uv} \right)^2 = \frac{m_1 m_2 D}{9 n k^2}
\ham(\tau, \tau').   
\end{align*}
Therefore, this implies that 
\begin{equation*}
\min_{\tau \neq \tau'} \frac{\ell^2(\B{g}^{\tau}, \B{g}^{\tau'})}{\ham(\tau,
  \tau')} = \frac{m_1 m_2 D}{9 n k^2}. 
\end{equation*}
To bound $\|\P_{\B{g}^{\tau}} - \P_{\B{g}^{\tau'}}\|_{TV}$, we use Pinsker's inequality because the Kullback-Leibler divergence $D(\P_{\B{g}^{\tau}} \|
\P_{\B{g}^{\tau'}})$ has a simple expression in terms of $\ell^2(\B{g}^{\tau},
\B{g}^{\tau'})$:  
\begin{equation*}
\|\P_{\B{g}^{\tau}} - \P_{\B{g}^{\tau'}}\|^2_{TV} \leq \frac{1}{2} D \left(\P_{\B{g}^{\tau}} \| \P_{\B{g}^{\tau'}} \right) = \frac{n}{4 \sigma^2} \ell^2 \left(\B{g}^{\tau}, \B{g}^{\tau'} \right)= \frac{m_1 m_2 D}{36 \sigma^2 k^2} \ham(\tau, \tau').  
\end{equation*}
This gives
\begin{equation*}
 \min_{\ham (\tau, \tau^{'}) = 1} \left(1 - \|\P_{\B{g^{\tau}}} -
      \P_{\B{g}^{\tau'}}\|_{TV} \right) \geq \left(1 - \frac{\sqrt{m_1 m_2 D}}{6 k \sigma} \right).
\end{equation*}
Lemma \ref{suad} then gives the lower bound for $\Delta :=
\inf_{\tilde{\bt}} \sup_{\bt \in \M: D(\bt) \leq D} R(\bt,
\tilde{\bt})$ as given below:
\begin{equation*}
\Delta \geq \frac{m_1 m_2 D}{72 n} \left(1 - \frac{\sqrt{m_1 m_2D}}{6
    k \sigma} \right). 
\end{equation*}
Because $k = \lfloor n_i/m_i \rfloor$ for $i = 1, 2$, it follows that $n_i/(k+1) \leq m_i \leq n_i/k$ for $i = 1, 2$. This gives
\begin{equation*}
\Delta \geq \frac{D}{72(k+1)^2} \left(1 - \frac{ \sqrt{n D}}{6 \sigma
    k^2} \right) \geq \frac{D}{288 k^2} \left(1 - \frac{\sqrt{nD}}{6
    \sigma k^2} \right) 
\end{equation*}
where we have also used that $k+1 \le 2k$. The choice $k = (nD/(9
\sigma^2))^{1/4}$ now leads to $\Delta \geq \sigma \sqrt{D}/(192
\sqrt{n})$. This gives what we wanted to prove provided our choice of
$k$ satisfies $1 \leq k \leq \min(n_1, n_2)$. For this, it suffices to
simply note that $n \geq 9 \sigma^2/D$ implies that $k \geq 1$ and
\eqref{lobo.con} implies $k \leq \min(n_1, n_2)$. This completes the
proof of Theorem \ref{lobo}. \qed

\bibliographystyle{plainnat}
\bibliography{AG}
\end{document}